\newtheorem{theorem}{Theorem}
\newtheorem{proposition}{Proposition}
\newtheorem{definition}{Definition}
\newtheorem{lemma}{Lemma}
\newcommand{\RR}{\mathbb R}
\newcommand{\N}{\mathbb N}
\newcommand{\proba}{\mathbb P}
\newcommand{\expect}{\mathbb E}
\newcommand{\tr}{\textsf{T}}
\author{Antonin Della Noce\thanks{Corresponding author: \texttt{antonin.della-noce@centralesupelec}}}
\author{Paul-Henry Cournède}
\affil{Université Paris-Saclay, CentraleSupélec, Lab of Mathematics and Informatics (MICS), 9 rue Joliot Curie, 91192, Gif-sur-Yvette, France}
\date{\today}
\title{Analysis of the dynamics induced by a competition index in a heterogeneous population of plants: from an individual-based model to a macroscopic model}
\begin{document}

\maketitle
\tableofcontents

\begin{abstract}
\label{sec:orga17ff1b}

Competition indices are models frequently used in ecology to account for the impact of density and resource distribution on the growth of a plant population. They allow to define simple individual-based models, by integrating information relatively easy to collect at the population scale, which are generalized to a macroscopic scale by mean-field limit arguments. Nevertheless, up to our knowledge, few works have studied under which conditions on the competition index or on the initial configuration of the population the passage from the individual scale to the population scale is mathematically guaranteed. We consider in this paper a competition index commonly used in the literature, expressed as an average over the population of a pairwise potential depending on a measure of plants' sizes and their respective distances. In line with the literature on mixed-effect models, the population is assumed to be heterogeneous, with inter-individual variability of growth parameters. Sufficient conditions on the initial configuration are given so that the population dynamics, taking the form of a system of non-linear differential equations, is well defined. The mean-field distribution associated with an infinitely crowded population is then characterized by the characteristic flow, and the convergence towards this distribution for an increasing population size is also proved. The dynamics of the heterogeneous population is illustrated by numerical simulations, using a Lagrangian scheme to visualize the mean-field dynamics.
\end{abstract}

\section{Introduction}
\label{sec:orge597577}

Individual-based models (IBM) are powerful tools to explain the macroscopic
behavior of a complex system. In a population with interacting individuals,
taking into account the multi-scale effect is essential to understand the
eventual steady regime and the overall spatial patterns formed by their
evolution.

Regarding the modeling of plant populations, the Bolker-Pacala-Dieckmann-Law
(BPDL) model accounts for the processes of birth, death, and competition for
resources between individual plants [\ref{org679f66f},\ref{orga5cb81b}] at a macroscopic scale. A
rigorous formulation of the dynamics of this model as a point process was
obtained by Fournier and Méléard [\ref{org9af7581}]. Under specific configurations,
the dynamics of a large population can be approximated by a deterministic
process called the mean-field limit of the microscopic process. This object is a
formal representation of the macroscopic level of the population, in the sense
that at this scale, the individuals constituting the population are
indistinguishable or are not clearly defined.

Initially, the plants in the BPDL model were only described by their positions
in the plane, and the evolution of the corresponding point process was piecewise constant. The competition between the plants was then added to the natural
mortality rate, i.e., the frequency at which points disappeared from the
collection. This purely discrete-time model was then refined by Campillo and
Joannides [\ref{org960cc74}], incorporating a continuous growth process of
individuals. This representation allowed the effect of competition on plant
morphology to be accounted for in conjunction with their spatial distribution
[\ref{org9115032}, \ref{org1ddeaac}]. In most of these works, the population is assumed to
consist of identical individuals, except for Law and
Dieckmann [\ref{orga5cb81b}]. In their work, the authors considered
interactions between populations of different species, within which the
individuals are homogeneous.

We are interested here in the extension of the competition phenomenon to
continuously heterogeneous populations, i.e., populations where the
characteristics of the individuals can be continuously distributed. Different
models of competition were confronted with experimental data by Schneider
et al. [\ref{org07ae0a9}]. The authors used a hierarchical Bayesian framework to
conduct their analyses, with inter-individual variability of growth parameters
within the population. In this paper, we wish to link a dynamical system of the
form studied by Schneider et al. and the BPDL process, in particular, the
analysis of the system's behavior when the size of the population tends towards
infinity.

Our contributions are the following ones: first, we present a growth and competition
model proposed by Schneider et al., and we study the conditions of existence of
a global solution. In a second step, we establish the
mean-field dynamics associated with the system, corresponding to the limit when
the population size tends to infinity. Finally, we give a methodology to
simulate the evolution over time of the mean-field distribution.

\section{Growth and competition model}
\label{sec:org8e07c06}

Competition models for light can provide a more or less accurate description of
plant morphology, according to the objectives of modellers [\ref{org0f9a722}].  The
estimation of the influence of plant organs and compartments over the local
light environment is necessary to account for the variability in the
architecture of the aerial parts [\ref{org3e15225},\ref{org179f28c},\ref{orgfe59c74}]. In the
model considered by Schneider et al., initially designed to study competition in
a monospecific population of annual plants, arabidopsis (\emph{Arabidopsis
thaliana}), the plant morphology is only described by a characteristic
dimension, and this considerably alleviates the experimental protocol to monitor
the evolution of the population. The inter-plants competition is expressed via
an empirical potential, which depends only on the observed individual features,
referred to as a competition index [\ref{orgdece5cb}].

\subsection{A Gompertz growth function}
\label{sec:org713c0e7}

We recall in this paragraph the Gompertz growth function for a single plant with growth rate \(\gamma > 0\) and
asymptotic size \(S > s_m\), where \(s_m>0\) is a minimal size.

Let us start by introducing the dynamics of the plant in the absence of competition. The plant state is uniquely described by the variable \(s\),
representing one of its characteristic dimension, like the diameter of the
rosette of \emph{A. thaliana} in [\ref{org07ae0a9},\ref{orgd4e2f62}], or the diameter at breast
height of a tree in [\ref{org9115032},\ref{org1ddeaac},\ref{orgacc2f3a}]. The size of the
individual plant follows a Gompertz growth [\ref{orgfc92cc1}] towards an asymptotic
size \(S\) at a rate \(\gamma\):
\begin{equation}
\label{eq:org98dfe15}
\frac{ds}{dt}(t) = \gamma s(t)\left(\log\left(\frac{S}{s_m}\right)-
\log\left(\frac{s(t)}{s_m}\right)\right).
\end{equation}
In the above equation, \(S,\gamma\) are intrinsic parameters of the individual
plant. The variation of these parameters from one plant to another can be due to
genetic variability or micro-variations of the environment. \(s_m\) is the minimal
size of the plants, principally used as a normalization constant.
The differential equation (\ref{eq:org98dfe15}) can be solved analytically for
any given initial condition \(s^0\)
\begin{equation}
s(t) = S\left(\frac{s^0}{s_m}\right)^{e^{-\gamma t}}
\end{equation}
The Gompertz growth is a specific case of the Richards' growth, used to model
the evolution of the plant size without competition in [\ref{org960cc74}].

\subsection{Growth subject to competition}
\label{sec:org60dc92f}

In this section, we consider a heterogeneous population of plants represented by
the list \((s_i(t),x_i,S_i,\gamma_i)_{1\leq i\leq N}\), with \(N> 1\). The variable
\(x_i\) represents the position of the plant \(i\) in the plane \(\RR^2\),
\(S_i,\gamma_i\) are the individual parameters of the plant. Each individual at
a given time \(t\) can therefore be represented by a point in the space
\(\mathcal Z = \RR_+^*\times \RR^2\times \RR_+^*\times \RR_+\). \((x,S,\gamma)\) are individual specific parameters that are constant during the growth of the plant, and are in the space
\(\Theta\subset \RR^2\times \RR_+^*\times \RR_+\) of individual parameters. In what
follows, we use the
notation \(\theta\) for \((x,S,\gamma)\), \(\theta'\) for \((x',S',\gamma')\), etc. In
the next sections, some specific constraints are set on \(\Theta\), so
that the dynamics of the population meet particular conditions.

The growth of the plants in the population \((s_i(t),x_i,S_i,\gamma_i)_{1\leq i\leq N}\) is expressed as
a system of \(N\) differential equations: for all $i\in
\llbracket 1;N\rrbracket$, we assume that
\begin{equation}
\begin{aligned}
&\frac{ds_i}{dt}(t) = \gamma_i s_i(t)\left(\log\left(\frac{S_i}{s_m}\right)\left(
1-C_i^N(t)\right)-\log\left(\frac{s_i(t)}{s_m}\right)\right)\ ,\\
&\text{where }C_i^N(t) = \frac{1}{N-1}\sum_{j\neq i} C(s_i(t),s_j(t),\vert x_i-x_j\vert)\ , \text{and }\\
&C(s_i(t),s_j(t),\vert x_i-x_j\vert) = \frac{\log\left(\frac{s_j(t)}{s_m}\right)}{2R_M
\left(1+\frac{\vert x_i-x_j\vert^2}{\sigma_x^2}\right)}\left(1+\tanh\left(\frac{1}{\sigma_r}\log\left(\frac{s_j(t)}{s_i(t)}\right)\right)\right).
\end{aligned}
\end{equation}
\(C(s,s',\vert x-x'\vert)\) is the competition potential between two plants of sizes \(s,s'\)
and located at the distance \(\vert x-x'\vert \) of each other. \(C_i^N(t)\) is the
competition index exterted on the plant \(i\), resulting from an average of the
competition potential over all the other plants in the population. In
this model, competition is, therefore, to be understood as a negative
perturbation of the development a plant would theoretically have in optimal
conditions. The competition potential is composed of three factors that can be
interpreted separately:
\begin{enumerate}
\item \(\log(s_j(t)/s_m)/R_M\): the larger plant \(j\), the stronger the competition it
exerts on plant \(i\); \(R_M\) is chosen so that this term stays between 0 and 1;
\item \(1/2\left(1+\tanh\left(\frac{1}{\sigma_r}\log(s_j(t)/s_i(t))\right)\right)\):
the larger plant \(i\) in comparison with plant \(j\), the weaker the competition
exerted on plant \(i\) by \(j\); the parameter \(\sigma_r\) monitors the effect of
the relative size;
\item \(\frac{1}{1+\frac{\vert x_i-x_j\vert ^2}{\sigma_x^2}}\): the further apart plants \(i\)
and \(j\) are, the less competition there is; the parameter \(\sigma_x\)
monitors the rate of the spatial decrease of the competition.
\end{enumerate}
\(C\) is by design supposed to be a proportion between 0 and 1. This property is
true if we can prove that the plant size does not exceed some maximal size \(S_M\)
and if \(R_M\geq \displaystyle \log\left(\frac{S_M}{s_m}\right)\). In the next
section, we establish a sufficient condition for this property to hold.

The competition potential considered in this article is one of the model studied
in Schneider et al. [\ref{org07ae0a9}], but the authors have considered a
non-normalized potential. The structure of the potential is very similar to the
one used in Adams et al. [\ref{org9115032}], with the difference that the spatial
decrease of competition is expressed with a Gaussian kernel. This difference
has no impact on the results discussed in the next sections. This model was
chosen for its smoothness with respect to plant state and distance between
competitors, and it was shown to capture well the dynamics of experimental
observations in Schneider et al. [\ref{org07ae0a9}]. As for the competition index
based on the intersection of the region of influence, that is used as an
illustration in Campillo and Johannides [\ref{org960cc74}], it does not have an
analytical expression, which makes the analysis of large population dynamics
more difficult. 

This competition model is rather simple, as it does not take into account
the architecture of the plant and its interaction with the resources of its
environment, but shows good robustness properties. We refer the reader to the models studied in Cournède et
al. [\ref{org179f28c}] and Beyer et al. [\ref{orgfe59c74}] for more complex and architecture-based representation of the competition for light in plants. 

\subsection{Existence and uniqueness of a global solution}
\label{sec:org812deaf}

We study the properties of the solutions of the differential system induced by
the competition. In particular, we establish sufficient conditions on the
parameters and on the initial conditions to ensure that the system has a
dynamics consistent with the biology: no finite-time blow-up (the solution must
be globally defined), the sizes of the plant must remain positive, and all
competition indices must remain between 0 and 1. To this purpose, we will use
the following lemma which is related with Grönwall lemma.

\begin{lemma}
Let \(y:[0;T)\rightarrow \RR\) be a continuously differentiable function
defined over an interval \([0;T)\) where \(T\in \bar \RR_+\). If there exists $a\in
\RR$ and \(b\neq 0\) such that for all \(t\in [0;T)\) $\displaystyle \frac{dy}{dt}\leq 
a-by(t)$ (resp. \(\displaystyle \frac{dy}{dt}\geq a-by(t)\)), then for all $t\in
[0;T)$, we have
\begin{equation}
y(t)\leq \text{(resp. }\geq ) \frac{a}{b} + \left(y(0)-\frac{a}{b}\right)e^{-bt}
\end{equation}  
\label{orgd765c7c}
\end{lemma}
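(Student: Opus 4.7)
The plan is a standard integrating-factor argument, essentially a rewriting of the differential inequality so that one side becomes an exact derivative that can be integrated directly.

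First I would reduce to a homogeneous inequality by the substitution $z(t) = y(t) - a/b$, which is well defined precisely because of the hypothesis $b\neq 0$. A direct computation gives $\frac{dz}{dt} = \frac{dy}{dt} \leq a - b\bigl(z(t)+\tfrac{a}{b}\bigr) = -b\,z(t)$, so it suffices to prove that $z(t) \leq z(0)e^{-bt}$.

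Next, I would multiply by the integrating factor $e^{bt}$, which is strictly positive and therefore preserves the direction of the inequality regardless of the sign of $b$. This turns the inequality into $\frac{d}{dt}\!\bigl(z(t)e^{bt}\bigr) = e^{bt}\bigl(\tfrac{dz}{dt} + b\,z(t)\bigr) \leq 0$, so the map $t\mapsto z(t)e^{bt}$ is non-increasing on $[0;T)$. Integrating from $0$ to $t$ (or invoking monotonicity directly) yields $z(t)e^{bt} \leq z(0)$, hence $z(t) \leq z(0)e^{-bt}$, and substituting back gives the claimed bound. The reverse case ($\frac{dy}{dt} \geq a - b y$) is obtained by the same argument with every inequality reversed.

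There is no real obstacle here; the only point worth flagging is that nothing needs to be assumed about the sign of $b$, because the integrating factor $e^{bt}$ is positive in any case and the conclusion is stated symmetrically in terms of $a/b$. The hypothesis $b\neq 0$ is used only to give meaning to the equilibrium value $a/b$ around which the inequality is linearised.
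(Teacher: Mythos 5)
Your argument is correct: the substitution $z(t)=y(t)-a/b$ followed by the integrating factor $e^{bt}$ (positive regardless of the sign of $b$) gives exactly the claimed bound, and the reversed case follows symmetrically. The paper states this lemma without proof, and yours is the standard Gr\"onwall-type argument the authors implicitly rely on, so there is nothing to add; your remark that $b\neq 0$ is needed only to make sense of the equilibrium value $a/b$ is also accurate.
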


\begin{proposition}
Let the initial configuration of the population
\((s_i^0,x_i,S_i,\gamma_i)_{1\leq i \leq N}\). If for all plant $i\in \llbracket 
1;N\rrbracket$, \(s_m < s_i^0 < S_i\), \(s_m< S_i< s_m e^{R_M}\) and \(\gamma_i >0\), 
then the dynamical system of \(N\) equations, defined by
\begin{equation}
\label{eq:org70d362f}
    \forall i \in \{1,...,N\},\left\{\begin{array}{l}
    s_i(0) = s_i^0\\
    \displaystyle \forall t\geq 0,\frac{ds_i}{dt}(t) = \gamma_i s_i(t)\left(\log
\left(\frac{S_i}{s_m}\right)\left(1-C_i^N(t)\right)-\log\left(\frac{s_i(t)}{s_m}
\right)\right)
    \end{array}\right.
\end{equation}
has a unique global solution, defined over \(\RR_+\). Moreover, for all \(t\geq 0\)
and for all \(i\in \llbracket 1;N\rrbracket\), \(0\leq C_i^N(t)\leq 1\).
\label{orgcab5114}
\end{proposition}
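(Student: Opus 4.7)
The plan is a continuation (bootstrap) argument built on the change of variable $y_i(t) = \log(s_i(t)/s_m)$, which recasts the Gompertz dynamics in a form where the comparison principle of Lemma~\ref{orgd765c7c} applies directly. Because the right-hand side of \eqref{eq:org70d362f} is of class $C^{\infty}$ on the open set $\Omega = \{s \in \RR^N : s_i > s_m \text{ for all } i\}$, the Cauchy--Lipschitz theorem yields a unique maximal solution on some interval $[0, T^{\star})$, together with the standard alternative: either $T^{\star} = +\infty$, or the trajectory leaves every compact subset of $\Omega$ as $t \uparrow T^{\star}$.

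The first substantive step is the elementary observation that $1 + \tanh(\cdot) \in (0,2)$ and $1/(1 + |x_i - x_j|^2/\sigma_x^2) \in (0,1]$ together imply $0 \leq C(s_i, s_j, |x_i - x_j|) \leq \log(s_j/s_m)/R_M$ whenever $s_j \geq s_m$, and hence that $0 \leq C_i^N(t) \leq 1$ as long as every $s_j(t) \in [s_m, s_m e^{R_M}]$. I would then define the bootstrap time $T_0$ as the supremum of $t \in [0, T^{\star})$ such that $s_m < s_j(\tau) < s_m e^{R_M}$ for every $\tau \in [0,t]$ and every $j \in \llbracket 1;N\rrbracket$. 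The hypotheses $s_m < s_i^0 < S_i < s_m e^{R_M}$ and continuity ensure $T_0 > 0$. On $[0,T_0)$, writing $Y_i = \log(S_i/s_m) \in (0, R_M)$, the ODE becomes $dy_i/dt = \gamma_i\bigl(Y_i(1 - C_i^N(t)) - y_i\bigr)$, which is squeezed between $-\gamma_i y_i$ and $\gamma_i Y_i - \gamma_i y_i$. Lemma~\ref{orgd765c7c} applied on both sides yields
\[ y_i^0 e^{-\gamma_i t} \;\leq\; y_i(t) \;\leq\; Y_i + (y_i^0 - Y_i) e^{-\gamma_i t}, \]
so $0 < y_i(t) < Y_i$ strictly on $[0, T_0)$, with the strict inequalities surviving the passage to $T_0$ by continuity.

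A standard bootstrap then forces $T_0 = T^{\star}$: otherwise the strict bounds obtained at $T_0$ would contradict the maximality in its definition. Moreover, if $T^{\star} < \infty$, the uniform two-sided bound confines the trajectory to the compact set $\prod_i [\,s_m e^{y_i^0 e^{-\gamma_i T^{\star}}},\, S_i\,] \subset \Omega$, contradicting the blow-up alternative; thus $T^{\star} = +\infty$, and $0 \leq C_i^N(t) \leq 1$ for all $t \geq 0$ follows from the first step. The main obstacle is precisely the circularity hidden in that first step: the Grönwall-type estimates on $s_i$ require $C_i^N \in [0,1]$, while that very bound requires $s_i \in (s_m, s_m e^{R_M})$. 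The bootstrap via $T_0$ severs this loop, and the hypothesis $S_i < s_m e^{R_M}$ is exactly what makes the strict upper bound $Y_i < R_M$ available to both initialize and sustain it.
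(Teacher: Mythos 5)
Your proof is correct and takes essentially the same route as the paper: a bootstrap/continuation argument in which Lemma \ref{orgd765c7c}, applied to $\log(s_i/s_m)$, squeezes each size strictly between $s_m$ and $S_i$ on the bootstrap interval, which in turn keeps every $C_i^N$ in $[0;1]$ and rules out finite-time blow-up. The only cosmetic differences are that you conclude globality via the escape-from-compact-sets alternative where the paper extends the solution by continuity at $t^*$ using the absolute convergence of $\int_0^{t^*}\frac{ds_i}{dt}\,dt$, and that your upper bound $s_i(t)\leq S_i\left(s_i^0/S_i\right)^{e^{-\gamma_i t}}$ is the sharp form of the paper's inequality (\ref{eq:orge560810}), which is what actually yields $s_i(t)<S_i$.
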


\begin{proof}:
Let \(t_m >0\) be the upper-bound of the defintion interval of the maximal
solution of equation (\ref{eq:org70d362f}). We consider the interval
\begin{equation}
\mathcal I = \left\{t\in [0;t_m)\vert \forall \tau \in [0;t],\forall i \in \llbracket 
1;N\rrbracket,~s_m<s_i(\tau)<S_i \right\}
\end{equation}
Let \(t^* = \sup\mathcal I\). For all \(t\in [0;t^*)\), by application of lemma
\ref{orgd765c7c}, we have the following
inequalities
\begin{equation}
\label{eq:orge560810}
s_m\left(\frac{s_i^0}{s_m}\right)^{e^{-\gamma_i t}}\leq s_i(t)\leq S_i\left(
\frac{s_i^0}{s_m}\right)^{e^{-\gamma_it}}.
\end{equation}
If \(t^* < +\infty\), then we can prove using inequality (\ref{eq:orge560810})
that the integral \(\displaystyle \int_0^{t^*} \frac{ds_i}{dt}(t)dt\) is
absolutely convergent for all \(i\in \llbracket 1;N\rrbracket\). Therefore, all
sizes \(s_i\) have a limit when \(t\rightarrow t^*\), and they can be extended by
continuity at time \(t^*\). At this time, we have for all \(i\), $s_m<s_i(t^*)<S_i$, which is in contradiction with the definition of \(t^*\). We conclude that
\(t^* = t_m = +\infty\). The inequalities on the competition indices \(C_i^N\) are
direct consequences of the fact that \(s_m < s_i(t)<S_i\) at all time \(t\geq 0\).
\end{proof}

We can rewrite the expression of the size \(s_i(t)\) using its competition index
to compare it with the case without competition.
\begin{equation}
s_i(t) = s_m\left(\frac{s^0_i}{s_m}\right)^{e^{-\gamma_i t}}\exp\left(\log\left(
\frac{S_i}{s_m}\right)\gamma_i\int_0^t(1-C^N_i(\tau))e^{\gamma_i(\tau-t)}d\tau\right)
\end{equation}
Therefore, the asymptotic behavior when \(t\rightarrow +\infty\) is mainly driven
by the term \(\displaystyle
\gamma_i\int_0^t(1-C^N_i(\tau))e^{\gamma_i(\tau-t)}d\tau\), which can be
understood as an average over time, with exponential weight \(\gamma_i e^{\gamma_i(\tau-t)}\), of the complement of the competition.  

\subsection{Simulation of the growth}
\label{sec:orgbb70bff}

\subsubsection{Example of an initial distribution}
\label{sec:orgee9fd66}
\label{org038c561}

In this section, we choose a parametric expression for the initial configuration
distribution \(\mu_0\). We are interested in a population having a spatial
distribution of individual parameters \(S\) and \(\gamma\), meaning that these
parameters are chosen with a high correlation with the position variable \(x\).

In keeping with Schneider et al. [\ref{org07ae0a9}] and Lv et al. [\ref{orgd4e2f62}], the
initial sizes of the plants are fixed to a constant \(\bar s = s_0>s_m\) over
the population, and the positions of the plants are distributed according to a
Gaussian distribution \(\mathcal N(0,L^2\mathrm I_2)\) for some distance
\(L\). Lv et al. [\ref{orgd4e2f62}] also consider a Poisson point-process
distribution of the plants over the plane. To simplify the subsequent analysis, we will consider that the
initial configurations of the individuals in the population are independent and
identically distributed, i.e., with a distribution of the form \(\mu_0^{\otimes N}\).

The distribution of the individual parameters \(S\) and \(\gamma\) is determined by
two parametric surfaces \(x\in \RR^2\mapsto\bar S(x)\in \RR_+\) and \(x\in \RR^2\mapsto \bar \gamma(x)\in \RR_+\) defined by
\begin{equation}
\begin{aligned}[t]
\bar S(x) = & S_0+(S_M-S_0)\exp\left(-\frac{1}{2}(x-x_1^S)^\tr H_1^S(x-x_1^S)\right)
\\
 & -(S_0-S_m)\exp\left(-\frac{1}{2}(x-x_2^S)^\tr H_2^S(x-x_2^S)\right)\\
\bar \gamma(x) = & \gamma_0+(\gamma_M-\gamma_0)\exp\left(-\frac{1}{2}(x-x_1^\gamma)^\tr H_1^\gamma(x-x_1^\gamma)\right)\\
& -(\gamma_0-\gamma_m)\exp\left(-\frac{1}{2}(x-x_2^\gamma)^\tr H_2^\gamma(x-x_2^\gamma)\right)
\end{aligned}
\end{equation}
In the above equations, the surfaces are parameterized by their offsets \(S_0\) or
\(\gamma_0\), the location of high values of the parameter by \(x_1^S\) or
\(x_1^\gamma\), the location of low values of the parameter by \(x_2^S\) or
\(x^\gamma_2\), typical high values by \(S_M\) or \(\gamma_M\), and typical low values by
\(S_m\) or \(\gamma_m = 0\). The four matrices \(H_1^S,H_2^S,H_1^\gamma,H_2^\gamma\) are
symmetric positive and they monitor the shape of the surface in the
neighborhoods of \(x_1^S\), \(x_2^S\), \(x_1^\gamma\), \(x_2^\gamma\) respectively.

Individual parameters \(S\) and \(\gamma\) are independent conditionally to
the position \(x\), with the following conditional distributions
\begin{equation}
S\vert x\sim \mathcal N_{[S_m;s_me^{R_M}]}(\bar S(x),\delta S^2), ~ \gamma\vert x\sim
\mathcal N_{[0;\gamma_M]}(\bar \gamma (x),\delta \gamma^2),
\end{equation}
where \(\mathcal N_{[a;b]}\) is a truncated Gaussian distribution over the segment
\([a;b]\).

In summary, the initial distribution chosen as an example for the simulation has
the following expression:
\begin{equation}
\begin{array}{ll}
\mu_0(ds,dx,dS,d\gamma) = &\delta_{s_0}(ds)\mathcal N_{[S_m;s_me^{R_M}]}(
\bar S(x),\delta S^2)(dS)\\ 
&\ \ \ \ \ \ \ \mathcal N_{[0;\gamma_M]}(\bar \gamma (x),\delta
\gamma^2)(d\gamma)
\mathcal N(0,L^2\mathrm I_2)(dx)
\end{array}
\end{equation}
\subsubsection{Simulation of a population subject to competition}
\label{sec:org58a8798}

\label{org46e8607}

We simulate the growth of a population of size \(N = 50\) individuals starting from
independent and identically distributed samples of the distribution \(\mu_0\)
defined in the previous section. We used a Runge-Kutta method of 5\(^{th}\)
order with 4\(^{th}\) order free interpolation [\ref{org5f8ecba}] implemented in
the package \texttt{DifferentialEquations.jl} of \texttt{Julia} language
[\ref{org32a4d27}] to solve the nonlinear system (\ref{eq:org70d362f}). The values
of the parameters chosen for the initial distribution and the competition
potential are given in Table \ref{org4cb584e}.

\begin{figure}[htbp]
\centering
\includegraphics[width=.9\linewidth]{./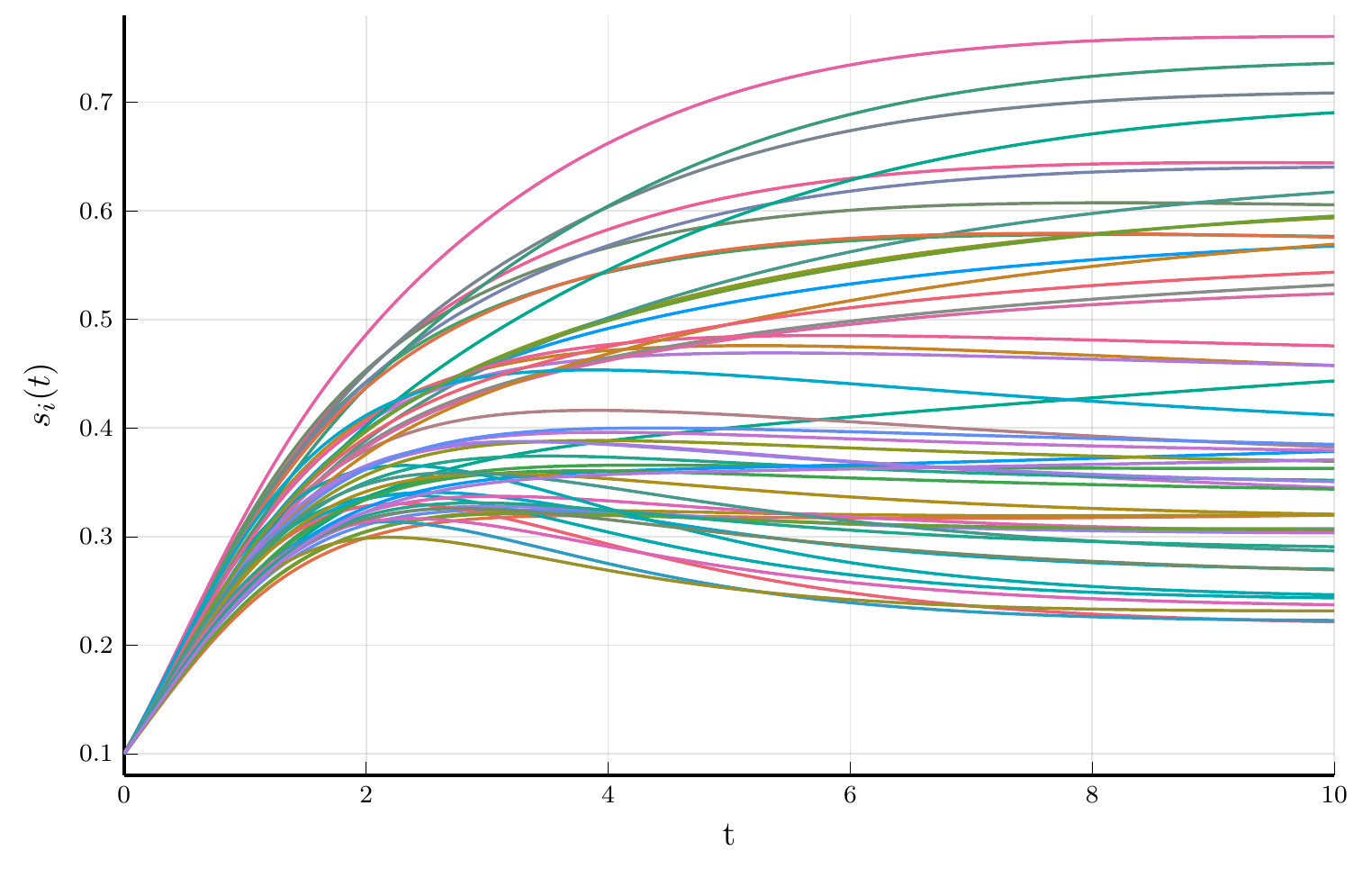}
\caption{\label{fig:org68c03a8}Set of individual trajectories \(t\in [0;10]\mapsto s_{1:N}(t)\) obtained with Julia package \texttt{DifferentialEquations.jl} for a population of size \(N=50\).}
\end{figure}

Figure \ref{fig:org68c03a8}  represents the evolution of a population of size
\(N = 50\) individuals over the time interval \([0;10]\) for a given initial
configuration. We can notice by looking at the final slopes of the growth curves
that the sizes of the different plants have, for a vast majority, reached
stationary sizes at time \(T = 10\), leading to think that the whole population
may have a stationary distribution.

\section{Mean-field model}
\label{sec:org053cfa3}

We study the asymptotic behavior of the system (\ref{eq:org70d362f}) when the size \(N\)
of the population tends towards \(+\infty\). More precisely, we would like to
characterize the growth of an individual plant in an infinitely crowded
population. This evolution is represented by the notion of characteristic flow,
which drives the individual and the global dynamics at the same
time. Qualitatively, the characteristic flow can be seen as the growth of a plant in interaction with a population represented by a probability distribution, instead of a list
of individuals like in the case of the differential system (\ref{eq:org70d362f}). We first introduce the notion of population empirical measure and empirical
flow.

\subsection{Population empirical measure and empirical flow}
\label{sec:org27dddd6}

\begin{definition}
Let \(z^0_{1:N} = (s_i^0,x_i,S_i,\gamma_i)_{1\leq i\leq N} \in \mathcal Z^N\) be
an initial configuration satisfying the same assumptions as in proposition
\ref{orgcab5114}. Let \(t\in \RR_+\mapsto (s_i(t))_{1\leq i\leq N}\in (\RR_+^*)^N\) be the solution of the system (\ref{eq:org70d362f}). We define as the
population empirical measure the trajectory taking values in the space \(\mathcal{P}(\mathcal Z)\) of probability measures over \(\mathcal Z\):
\begin{equation}
t\in \RR_+\mapsto \hat \mu_N(t,z_{1:N}^0)\in \mathcal P(\mathcal Z)
\end{equation}
where for all \(t\geq 0\),
\begin{equation}
\label{eq:org6c37da2}
\hat \mu_N(t,z_{1:N}^0)(dz) = \frac{1}{N}\sum_{i = 1}^N \delta(s_i(t),x_i,S_i,\gamma_i)(dz)
\end{equation}
where \(\delta(s_i(t),x_i,S_i,\gamma_i)\) is the Dirac distribution centered at
the point \((s_i(t),x_i,S_i,\gamma_i)\). 
\end{definition}
The population empirical measure can be understood as a uniform distribution
over the individuals in the population.
The dynamics of the population empirical measure can be characterized by a
function related to the semi-group of the differential system (\ref{eq:org70d362f}),
that we refer to as the empirical flow, denoted by \(\hat s_N\). The property
characterizing the empirical flow can be written as follows: for all \(t\geq 0\),
\begin{equation}
\hat \mu_N(t,z^0_{1:N})(ds,d\theta) = (\hat s_N(t,s,\theta),\theta)\#\hat \mu(0,z^0_{1:N})(ds,d\theta)
\end{equation}
 For
any function \(f:\mathcal X\rightarrow \mathcal Y\) and any probability measure
\(\mu\in \mathcal P(\mathcal X)\), the operator \(f(x)\#\mu(dx)\in \mathcal P(\mathcal Y)\)
is the pushforward measure of \(\mu\) by the function \(f\), defined for all bounded
and measurable function \(\varphi: \mathcal Y\rightarrow \RR\) by
\begin{equation}
\int_{\mathcal Y}\varphi(y)[f(x)\#\mu(dx)](dy) = \int_{\mathcal X}\varphi(f(x))\mu(dx)
\end{equation}

The next proposition gives the expression of the empirical flow \(\hat s_N\).
\begin{proposition}
Let \((s_i^0,\theta_i)_{1\leq i\leq N}\in \mathcal Z^N\) be an initial configuration of
the population satisfying the same assumptions as in proposition
\ref{orgcab5114}. Then for all \((s^0,x,S,\gamma)\in \mathcal Z\) such that
\(s_m<s^0<S\) and \(s_m< S< s_me^{R_M}\), the differential equation
\begin{equation}
\label{eq:org38ac69c}
\begin{aligned}
&\left\{\begin{array}{l}
s(0) = s^0\\
\displaystyle \forall t\geq 0, \frac{ds}{dt}(t) = \gamma s(t)\left[\log\left(\frac{S}{s_m}\right)
\left(1-\hat C_N(s(t),x,\hat \mu_N(t,z^0_{1:N}))\right)-\log\left(
\frac{s(t)}{s_m}\right)\right]
\end{array}\right.\\
%&\text{where }\hat C_N(s(t),x,\hat \mu_N(t,z^0_{1:N}))\!= \!\frac{N}{N-1}\int_{\RR_+^*\times \RR^2}\!C(s(t),s',\vert x-x'\vert )\hat \mu_N^{s,x}(t,z^0_{1:N})(ds',dx')\!-\!\frac{C(s(t),s(t),0)}{N-1}
\end{aligned}
\end{equation} 
where
\begin{equation*}
\hat C_N(s(t),x,\hat \mu_N(t,z^0_{1:N}))\!= \!\frac{N}{N-1}\!\int_{\RR_+^*\times \RR^2}\!\!\!\!\!\!\!\!\!C(s(t),s',\vert x-x'\vert )\hat \mu_N^{s,x}(t,z^0_{1:N})(ds',dx')-\frac{C(s(t),s(t),0)}{N-1}
\end{equation*}
has a unique solution, which is defined over \(\RR_+\).  In equation (\ref{eq:org38ac69c}),
we use the notation \(\hat \mu_N^{s,x}(t,z^0_{1:N})\) for the marginal
distribution of \(\hat \mu_N(t,z^0_{1:N})\) with respect to the variables $s,x$. We call 
empirical flow the
function associating the initial condition to the solution of the differential
equation and we denote:
\begin{equation}
(t,s^0,\theta)\in \RR_+\times \mathcal Z\mapsto \hat s_N(t,s^0,\theta,z^0_{1:N})
\in \RR_+^*.
\end{equation}
\label{org23105ac}
\end{proposition}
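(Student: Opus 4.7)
The plan is to reduce the statement to a scalar, non-autonomous Cauchy problem driven by the population trajectory $(s_i(t))_{1\leq i\leq N}$, whose existence and confinement to $(s_m,S_i)$ are already guaranteed by Proposition \ref{orgcab5114}. Write the right-hand side of equation (\ref{eq:org38ac69c}) as
\begin{equation*}
F(t,s) = \gamma s\left[\log\!\left(\frac{S}{s_m}\right)\bigl(1 - \hat C_N(s,x,\hat \mu_N(t,z^0_{1:N}))\bigr) - \log\!\left(\frac{s}{s_m}\right)\right].
\end{equation*}
Since $\hat C_N$ is a finite sum over the already-known trajectories $s_j(t)$ and their positions, the map $t\mapsto F(t,\cdot)$ is continuous on $\RR_+$, and $F$ is $C^1$ in $s$ on $\RR_+^*$ (the only $s$-dependencies are the $\log(s/s_m)$ term, the $\log(s_j/s)$ inside the $\tanh$ factor of $C$, and an affine $s$ prefactor). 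Cauchy--Lipschitz then supplies a unique maximal solution $s$ on some interval $[0,t_m)$ with $t_m\in(0,+\infty]$.

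The second step is to obtain a priori bounds on $s(t)$ on $[0,t_m)$, following the template of the proof of Proposition \ref{orgcab5114}. Proposition \ref{orgcab5114} gives $s_m < s_j(t) < s_m e^{R_M}$ for all $j$ and all $t$, so every pairwise potential $C(s(t),s_j(t),|x-x_j|)$ lies in $[0,1]$ \emph{uniformly in the test particle's own size} $s(t)$ (the $\tanh$ factor is in $[0,2]$, the spatial factor in $(0,1]$, and $\log(s_j/s_m)/(2R_M)\in(0,1/2)$). Combined with the self-correction term this yields $\hat C_N(s,x,\hat \mu_N(t,z^0_{1:N}))\in\bigl[-\tfrac{1}{N-1},\tfrac{N}{N-1}\bigr]$. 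Setting $y(t) = \log(s(t)/s_m)$, equation (\ref{eq:org38ac69c}) rewrites as
\begin{equation*}
\frac{dy}{dt}(t) + \gamma\,y(t) = \gamma \log\!\left(\frac{S}{s_m}\right)\bigl(1 - \hat C_N(s(t),x,\hat \mu_N(t,z^0_{1:N}))\bigr),
\end{equation*}
and the two-sided bound on $\hat C_N$ combined with Lemma \ref{orgd765c7c} applied in both directions provides constants $y_-\leq y_+$, independent of $t\in[0,t_m)$, such that $y_-\leq y(t)\leq y_+$; in particular $s(t)$ stays bounded and bounded away from $0$.

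From there the global extension argument used in the proof of Proposition \ref{orgcab5114} applies verbatim: if $t_m$ were finite, the preceding bounds would keep $|ds/dt|$ uniformly bounded on $[0,t_m)$, the integral $\int_0^{t_m}(ds/dt)(\tau)\,d\tau$ would be absolutely convergent, and $s$ would extend continuously to $t_m$ with a value in $\RR_+^*$; restarting Cauchy--Lipschitz at $(t_m,s(t_m))$ would contradict the maximality of $t_m$. The step I expect to be the main obstacle is the derivation of these uniform-in-$s$ bounds on $\hat C_N$: the unknown $s$ enters $C$ nontrivially through the $\tanh$ factor, and without such a uniform control the scalar ODE could in principle fail to be dissipative. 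The crux is that $C(\cdot,s_j(t),\cdot)$ is bounded independently of its first argument, so the confinement of the population trajectories already established in Proposition \ref{orgcab5114} is exactly the input needed.
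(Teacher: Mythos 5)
Your overall strategy is the paper's own: bound the competition index, pass to $y=\log(s/s_m)$, apply Lemma \ref{orgd765c7c} in both directions, and conclude that the maximal solution cannot blow up. But the pivotal step—the claim that $\hat C_N(s,x,\hat\mu_N(t,z^0_{1:N}))\in\bigl[-\tfrac{1}{N-1},\tfrac{N}{N-1}\bigr]$ uniformly in the unknown size $s$—is false as stated, and asserting it is circular. The averaged part is indeed in $\bigl[0,\tfrac{N}{N-1}\bigr]$ uniformly in $s$, because $s$ only enters the first slot of $C$ through the bounded $\tanh$ factor while Proposition \ref{orgcab5114} confines the $s_j(t)$. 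The obstruction is the self-correction term: $C(s(t),s(t),0)=\log(s(t)/s_m)/(2R_M)$ has $s(t)$ in the \emph{second} slot, so it is not bounded independently of $s(t)$—it is negative when $s(t)<s_m$ and unbounded for large $s(t)$. Bounding it by $1$ presupposes $s_m\le s(t)\le s_m e^{2R_M}$, which is exactly the kind of confinement the argument is supposed to deliver (and is not automatic: the test particle is not one of the $N$ population members, and the true bounds do let $s(t)$ dip below $s_m$). Your stated worry about the $s$-dependence targets the $\tanh$ factor, which is harmless; the actual issue is this $\log(s/s_m)$ term, which your write-up absorbs silently in ``combined with the self-correction term''.

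The repair is short and is what the paper does: keep the $s$-dependence in the bound,
\begin{equation*}
-\frac{\log\left(\frac{s(t)}{s_m}\right)}{2R_M(N-1)}\;\le\;\hat C_N(s(t),x,\hat\mu_N(t,z^0_{1:N}))\;\le\;\frac{N}{N-1}-\frac{\log\left(\frac{s(t)}{s_m}\right)}{2R_M(N-1)}.
\end{equation*}
For $y=\log(s/s_m)$ this gives $\tfrac{dy}{dt}\le \gamma\log(S/s_m)+\gamma y\bigl(\tfrac{\log(S/s_m)}{2R_M(N-1)}-1\bigr)$ together with the symmetric lower inequality; these are still of the form $a-by$ with $b>0$ because $\log(S/s_m)<R_M$ implies $\tfrac{\log(S/s_m)}{2R_M(N-1)}\le\tfrac{1}{2(N-1)}<1$ for $N\ge 2$ (the case $\gamma=0$ is trivial, $s$ being constant). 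Lemma \ref{orgd765c7c} then yields the paper's explicit two-sided bounds $s_m e^{-2R_M/(2N-3)}\le s(t)\le s_m e^{(6N-5)R_M/(2N-3)}$, and your local existence and continuation arguments go through from there unchanged.
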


\begin{proof}:
We only need to prove that the maximal solution of the equation is bounded over
its definition interval, which will imply that the maximal solution is
global. We have, for all \(t\) in the definition interval, the following
inequalities on the competition term.
\begin{equation}
-\frac{\log\left(\frac{s(t)}{s_m}\right)}{2R_M(N-1)} \leq \hat C_N(s(t),x,\hat
\mu_N(t))\leq \frac{N}{N-1}-\frac{\log\left(\frac{s(t)}{s_m}\right)}{2R_M(N-1)}
\end{equation}
By implying lemma \ref{orgd765c7c}, we can obtain bounds on the solution \(t\mapsto
s(t)\).
\begin{equation}
s_m\exp\left(-\frac{2R_M}{2N-3}\right)\leq s(t)\leq s_m\exp\left(\frac{6N-5}{2N-
3}R_M\right)
\end{equation}
\end{proof}

The empirical flow is such that \(\forall i\in \llbracket 1;N\rrbracket, ~\hat s_N(t,s_i^0,\theta_i,z^0_{1:N}) = s_i(t)\) and therefore we have \((\hat s_N(t,s^0,\theta,z^0_{1:N}),\theta)\#\hat \mu_N(0,z^0_{1:N})(ds^0,d\theta) = \hat\mu_N(t,z^0_{1:N})(ds_t,d\theta)\).

Let us assume that the initial population empirical measure \(\hat \mu_N(0,z^0_{1:N})\) has a limit when \(N\rightarrow \infty\) for some metric. This
is the case, for instance, if the initial configuration of the population is
composed by independent samples drawn from some distribution \(\mu_0\), which is
the illustration we used for the simulation in Section
\ref{org46e8607}. In this case, we have indeed that
\begin{equation}
\hat \mu_N(0,z^0_{1:N})\xrightarrow[N\rightarrow \infty]{\mathcal D}\mu_0
\end{equation}
in distribution $\mu_{0}$-almost surely.

The differential equation characterizing
the empirical flow can be rewritten as follows:
\begin{equation}
\begin{aligned}
&\frac{\partial \hat s_N}{\partial t}(t,s^0,\theta,z^0_{1:N}) = \gamma \hat s_N(t,s^0,\theta,z^0_{1:N})\left(\log\left(\frac{S}{s_m}\right)\right.\\
&\times\left(1-\frac{N}{N-1}\int_{\mathcal Z}C(\hat s_N(t,s^0,\theta,z^0_{1:N}),\hat s_N(t,s',\theta',z^0_{1:N}),|x-x'|)\hat \mu_N(0,z^0_{1:N})(ds',d\theta')\right.\\
&\left.\left.+\frac{C(\hat s_N(t,s^0,\theta,z^0_{1:N}),\hat s_N(t,s^0,\theta,z^0_{1:N}),0)}{N-1}\right)-\log\left(\frac{\hat s_N(t,s^0,\theta,z^0_{1:N})}{s_m}\right)\right)
\end{aligned}
\end{equation}
From this expression, we can postulate that if the empirical flow \(\hat s_N\) has
a limit \(s_\infty\) in some sense when the size of the population tends to
infinity, then this limit must satisfy the following equation:
\begin{equation}
\label{eq:org49ac8ae}
\begin{aligned}
&\frac{\partial s_\infty}{\partial t}(t,s^0,\theta) = \gamma s_\infty(t,s^0,\theta)\left(\log\left(\frac{S}{s_m}\right)\left(1-\int_{\mathcal Z}C(s_\infty(t,s^0,\theta),s_\infty(t,s',\theta'),\vert x-x'\vert )\mu_0(ds',d\theta')\right)\right.\\
&\left.-\log\left(\frac{s_\infty(t,s^0,\theta)}{s_m}\right)\right)
\end{aligned}
\end{equation}
The objectives of the next sections is to prove that the object \(s_\infty\)
exists and is uniquely defined, and that the convergence of \(\hat s_N\) towards
\(s_\infty\) holds for some specific metric.
\subsection{Existence and uniqueness of the mean-field flow}
\label{sec:org28b4582}

The equations of the type (\ref{eq:org49ac8ae}) characterizes the mean-field flow
\(s_\infty(t)\), in reference to the mean-field distribution \(\mu_\infty(t)\),
representing the propagation through time of the initial datum \(\mu_0\) by the
dynamics of an infinitely-crowded population. The literature studying this type
of integro-differential equations is vast. In our case, we were mainly inspired by the
work of Golse [\ref{org7f4b2af}] and Bolley et al. [\ref{org759b328}], who have studied
dynamics that share similar properities with system (\ref{eq:org70d362f}).

\begin{theorem}
Let \(\Theta = \RR^2\times [s_m;s_m e^{R_M}]\times [0;\gamma_M]\) where \(\gamma_M >0\).
Let \(\mu_0\in \mathcal P(\RR_+^*\times \Theta)\) such that
\begin{equation*}
\int_{\RR_+^*}\log\left(\frac{s}{s_m}\right)^2\mu_0^s(ds)<+\infty
\end{equation*}
Then there exists a unique function \(s_\infty: (t,s_0,\theta)\in \RR_+\times
\RR_+^*\times \Theta\mapsto s_\infty(t,s_0,\theta)\in \RR_+^*\) continuously
differentiable with respect to \(t\), such that for all \((s_0,\theta)\in
\RR_+^*\times \Theta\), we have
\begin{equation}
\label{eq:org5810f15}
\begin{aligned}
&\left\{\begin{array}{l}
s_\infty(0,s_0,\theta) = s_0\\
\displaystyle \frac{\partial s_\infty}{\partial t}(t,s_0,\theta) = \gamma s_{\infty}(t,s_0,\theta)\left(\log\left(\frac{S}{s_m}\right)\left(1-\mathcal C(s_\infty,t,s_0,\theta)\right)-\log\left(\frac{s_\infty(t,s_0,\theta)}{s_m}\right)\right)
\end{array}\right.\\
&\text{where }\mathcal C(s_\infty,t,s_0,\theta) = \int_{\RR_+^*\times \Theta}C(s_\infty(t,s_0,\theta),s_\infty(t,s_0',\theta'),\vert x-x'\vert )\mu_0(ds_0',d\theta')
\end{aligned}
\end{equation}
\end{theorem}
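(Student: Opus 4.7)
The approach is a Picard--Banach fixed-point argument set up after the logarithmic change of variable $y(t,s_0,\theta)=\log(s_\infty(t,s_0,\theta)/s_m)$. Under this substitution, equation (\ref{eq:org5810f15}) becomes linear in $y$,
\begin{equation*}
\frac{\partial y}{\partial t}(t,s_0,\theta)=\gamma\bigl(\log(S/s_m)(1-\mathcal C(y,t,s_0,\theta))-y(t,s_0,\theta)\bigr),
\end{equation*}
and Duhamel's formula provides the explicit Picard form
\begin{equation*}
\Phi(y)(t,s_0,\theta)=\log(s_0/s_m)\,e^{-\gamma t}+\gamma\log(S/s_m)\int_0^t\bigl(1-\mathcal C(y,\tau,s_0,\theta)\bigr)e^{-\gamma(t-\tau)}\,d\tau,
\end{equation*}
whose fixed points are exactly the continuously differentiable solutions of (\ref{eq:org5810f15}). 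I would take $E_T=C([0,T],L^2(\mu_0))$ as the workspace; the second-moment hypothesis $\int\log(s/s_m)^2\,d\mu_0^s<+\infty$ is precisely what places the free term $\log(s_0/s_m)\,e^{-\gamma t}$ inside $E_T$.

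Two estimates on the competition functional drive the argument. First, $|1+\tanh|\leq 2$ and $1/(1+d^2/\sigma_x^2)\leq 1$ give $|C(s,s',d)|\leq|\log(s'/s_m)|/R_M$, so that Cauchy--Schwarz (using that $\mu_0$ is a probability measure) yields the pointwise control
\begin{equation*}
|\mathcal C(y,t,s_0,\theta)|\leq\frac{1}{R_M}\|y(t)\|_{L^2(\mu_0)}.
\end{equation*}
Injecting this into the Duhamel formula together with $\gamma\leq\gamma_M$ and the inequality $1-e^{-\gamma t}\leq \gamma_M T$ valid on $[0,T]$ shows that for any $y\in B_R\subset E_T$,
\begin{equation*}
\|\Phi(y)(t)\|_{L^2(\mu_0)}\leq \|y_0\|_{L^2(\mu_0)}+(R_M+R)\gamma_M T,
\end{equation*}
so $\Phi$ leaves the ball $B_R$ invariant once $T<1/\gamma_M$ and $R=(\|y_0\|_{L^2(\mu_0)}+R_M\gamma_M T)/(1-\gamma_M T)$.

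For the contraction step, differentiating $C$ written in the log coordinates gives $|\partial_y C|,|\partial_{y'}C|\leq c(1+|y'|)$ for an explicit $c$; combining the mean value theorem with Cauchy--Schwarz and the bound $\|y_i\|_{L^2}\leq R$ on $B_R$ produces the Lipschitz estimate
\begin{equation*}
|\mathcal C(y_1,t,s_0,\theta)-\mathcal C(y_2,t,s_0,\theta)|\leq K(R)\bigl(|y_1(t,s_0,\theta)-y_2(t,s_0,\theta)|+\|y_1(t)-y_2(t)\|_{L^2(\mu_0)}\bigr).
\end{equation*}
Plugged into the Duhamel representation and integrated against $\mu_0$, this yields $\|\Phi(y_1)(t)-\Phi(y_2)(t)\|_{L^2}\leq \tilde K(R)\int_0^t\|y_1(\tau)-y_2(\tau)\|_{L^2}\,d\tau$, so that endowing $B_R$ with the weighted norm $\|y\|_\lambda=\sup_{t\in[0,T]}e^{-\lambda t}\|y(t)\|_{L^2(\mu_0)}$ and taking $\lambda$ large enough makes $\Phi$ a strict contraction. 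Banach's theorem then yields a unique fixed point on every $[0,T]$ with $T<1/\gamma_M$; because this maximal horizon depends only on $\gamma_M$ and not on the initial data, rerunning the same argument with initial condition $y(T)\in L^2(\mu_0)$ extends the solution beyond $T$, and iteration patches the local solutions into a unique global $s_\infty=s_m e^y$ defined on $\RR_+$.

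The main obstacle is the unbounded support of $\mu_0$ in the size variable $s_0$: the partial derivative $|\partial_{y'}C|$ grows linearly in $|y'|$, so no uniform Lipschitz constant for $C$ is available. A naive $L^\infty$ contraction on a compactified state space --- which was the strategy used for the empirical flow in Proposition \ref{org23105ac}, where all sizes stay in $[s_m,s_me^{R_M}]$ --- is therefore unavailable here. Working in $L^2(\mu_0)$ is essential, since it lets these linearly growing Lipschitz constants be absorbed via Cauchy--Schwarz, and this is exactly where the second-moment hypothesis on $\mu_0$ enters in a decisive way. A secondary subtlety is that $\gamma$ may vanish on part of $\Theta$, which forbids any uniform exponential damping in time; this is the reason why the local horizon must be capped at $T<1/\gamma_M$ and why global existence is obtained by iteration rather than by a single contraction with exponential decay.
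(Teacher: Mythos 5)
Your core mechanism is the same as the paper's: pass to $r=\log(s_\infty/s_m)$, write the equation in Duhamel/integral form, run a Banach fixed-point argument on a ball, and globalize by concatenating local solutions (the paper does this via moment propagation, a semigroup step and a no-blow-up argument; your observation that the local horizon can be chosen depending only on $\gamma_M$ is a legitimate shortcut). Your individual estimates are correct: $\vert C(s,s',d)\vert\leq \vert\log(s'/s_m)\vert/R_M$, the Cauchy--Schwarz control $\vert\mathcal C\vert\leq \Vert y(t)\Vert_{L^2(\mu_0)}/R_M$, the invariance of $B_R$ for $T<1/\gamma_M$, and the Lipschitz bound on $\mathcal C$ with constants growing linearly in the size variable, absorbed by the second moment of $\log(s/s_m)$.

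There is, however, a genuine gap between what your construction delivers and what the theorem asserts. Working in $C([0,T],L^2(\mu_0))$ determines the flow only as an equivalence class, i.e. $\mu_0$-almost everywhere in $(s_0,\theta)$, and Banach's theorem gives uniqueness only within that class; the theorem requires a function $s_\infty$ defined, $C^1$ in $t$, and unique at \emph{every} $(s_0,\theta)\in\RR_+^*\times\Theta$, including points outside the support of $\mu_0$. This is not a cosmetic point: in the paper's own illustration $\mu_0^s$ is a Dirac mass, so ``$\mu_0$-a.e.'' pins down the flow at essentially a single initial size, whereas the pointwise flow at arbitrary $(s_0,\theta)$ is exactly what is used afterwards to push $\mu_0$ forward and to compare with the empirical flow. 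The paper avoids this by taking as workspace the weighted sup-norm space $\mathcal{SL}_{t_0,\delta t}$ of continuous functions on $[t_0-\delta t;t_0+\delta t]\times\RR\times\Theta$ with norm $\sup\vert\rho(t,r,\theta)\vert/(r^*+\vert r\vert)$: the linear-growth weight plays the same role as your $L^2(\mu_0)$ norm (it absorbs the linearly growing Lipschitz constants through $R_1(\nu_0),R_2(\nu_0)$, which is where the second-moment hypothesis enters), but the fixed point is pointwise-defined and pointwise-unique from the start. Your argument is repairable: once the $\mu_0$-a.e. flow is fixed, $\mathcal C$ becomes, for each fixed $(s_0,\theta)$, a given time-dependent nonlinearity in the single scalar unknown $y(t,s_0,\theta)$, Lipschitz with constant of order $\int\vert\log(s'/s_m)\vert\,\mu_0(ds')/(R_M\sigma_r)$, so Picard--Lindel\"of gives pointwise existence and uniqueness consistent with the $L^2$ class --- but this step, together with uniqueness among all pointwise solutions (which the paper treats by a separate Gr\"onwall comparison), must be added explicitly; it does not follow from the $L^2$ fixed point alone. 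A minor further point: your restart at time $T$ implicitly uses the concatenation (semigroup) property of the label-indexed system, which is true but deserves a sentence, as in the paper.
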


\begin{proof}

For convenience, we study the equation satisfied by the logarithm of the
size instead, i.e.,
\(\displaystyle r_\infty(t) = \log\left(\frac{s_\infty(t)}{s_m}\right)\), and
\(\displaystyle\nu_0(dr,d\theta) = \left(\log\left(\frac{s}{s_m}\right),\theta\right)\#
\mu_0(ds,d\theta)\).

\begin{equation}
\begin{aligned}
&\left\{\begin{array}{l}
\displaystyle\frac{\partial r_{\infty}}{\partial t}(t,r_0,\theta) =
\gamma\left(\log\left(\frac{S}{s_m}\right)\left(1-\mathcal
C_r(r_\infty,t,r_0,\theta)\right)-r_\infty(t,r_0,\theta)\right)\\
r(t_0,r_0,\theta) = r_0
\end{array}\right.\\
&\text{where
}\mathcal C_r(r_\infty,t,r_0,\theta) = \int_{\RR\times
\Theta}C_r(r_\infty(t,r_0,\theta),r_\infty(t,r_0',\theta'),\vert \theta_x-\theta'_x\vert )
\nu_0(dr_0',d\theta')\\
&\text{and
}C_r(r,r',\vert x-x'\vert ) =
\frac{r'}{2R_M\left(1+\frac{\vert x-x'\vert ^2}{\sigma_x^2}\right)}\left(1+
\tanh\left(\frac{r'-r}{\sigma_r}\right)\right)
\end{aligned}
\end{equation}
The above equation is equivalent to the following integral form:
\begin{equation}
\label{eq:org1aec3cd}
r_\infty(t,r_0,\theta) = r_0 +\gamma\int_{t_0}^t\left(\log\left(\frac{S}{s_m}
\right)(1-\mathcal C_r(r_\infty,\tau,r_0,\theta))-r_\infty(\tau,r_0,\theta)
\right)d\tau
\end{equation}

The outline of the proof is as follows:
\begin{enumerate}
\item we show that equation (\ref{eq:org1aec3cd}) admits a solution in a specific space of
continuous functions defined over a small time interval \([t_0-\delta
   t;t_0+\delta t]\).
\item We derive upper-bounds of the solution with respect to spatial and time
variables.
\item We show that the solution of (\ref{eq:org1aec3cd}) is uniquely defined over its
definition interval
\item A semi-group structure is proved for the solutions of (\ref{eq:org1aec3cd}).
\item We prove that \(\RR_+\) is included in the definition interval of the maximal
solution associated with the initial data \((0,\nu_0)\).
\end{enumerate}

\underline{Local existence}

Let \(t_0\in \RR\), \(r^*>0\), \(\delta t > 0\). Let us define the following
function over the space of continuous functions over \([t_0-\delta t;t_0+\delta
t]\times \RR\times \Theta\):
\begin{equation*}
\begin{aligned}
\rho\in \mathcal C^0([t_0-\delta t;t_0+\delta t]&\times \RR\times \Theta
\rightarrow \RR)\\
&\longmapsto \Vert \rho\Vert _{t_0,\delta t} = \sup\left\{\frac{\vert \rho(t,r,
\theta)\vert }{r^*+\vert r\vert },(t,r,\theta)\in [t_0-\delta t;t_0+\delta t]\times \RR\times
\Theta\right\}\in \bar\RR
\end{aligned}
\end{equation*}

The space of continuous functions
\(\mathcal{SL}_{t_0,\delta t} = \left\{\rho\in \mathcal C^0([t_0-\delta t;t_0+\delta t]\times \RR\times \Theta\rightarrow \RR)\vert ~\Vert \rho\Vert _{t_0,\delta t}< \infty\right\}\)
is a Banach space for the norm
\(\rho \in \mathcal{SL}_{t_0,\delta t}\mapsto \Vert \rho\Vert _{t_0,\delta t}\in \RR_+\). Over
\(\mathcal{SL}_{t_0,\delta t}\), we define the functional:
\begin{equation}
\begin{aligned}
&(\rho,t,r_0,\theta)\in \mathcal{SL}_{t_0,\delta t}\times [t_0-\delta t;t_0+
\delta t]\times \RR
\times \Theta\\
&\longmapsto \Phi(\rho,t,r_0,\theta) = r_0 + \gamma\int_{t_0}^t
\left(\log\left(\frac{S}{s_m}\right)(1-\mathcal C_r(\rho,\tau,r_0,\theta))-
\rho(\tau,r_0,\theta)\right)d\tau
\end{aligned}
\end{equation}
We have the following inequalities on the function \(\Phi\), for
\(\rho_1,\rho_2\in \mathcal{SL}_{t_0,\delta t}\)
\begin{equation*}
\begin{aligned}
&\Vert \Phi(\rho_1)\Vert _{t_0,\delta t}\leq \max\left\{1+\gamma_M\delta t
\Vert \rho_1\Vert _{t_0,\delta t}, \quad\gamma_M\delta t\frac{R_M + \Vert \rho\Vert _{t_0,\delta t}(
R_1(\nu_0)+r^*)}{r^*}\right\}\\
&\Vert \Phi(\rho_1)-\Phi(\rho_2)\Vert _{t_0,\delta t}\leq \gamma_M\delta t\Vert \rho_1-
\rho_2\Vert _{t_0,\delta t}\max\left\{1+\frac{\Vert \rho_1\Vert _{t_0,\delta t}}{2R_M
\sigma_r},\right.\\
&\left.\frac{(2R_M\sigma_r+\Vert \rho_1\Vert _{t_0,\delta t}R_1(\nu_0))r^* + 4R_1(\nu_0)
R_M\sigma_r+R_MR_2(\nu_0)(\Vert \rho_1\Vert _{t_0,\delta t}+\Vert \rho_2\Vert _{t_0,\delta t})}{
2R_M\sigma_r r^*}\right\}\\
&\text{with for }d \in \{1,2\},\quad R_d(\nu_0) = \int_{\RR}(r^*+\vert r\vert )^d\nu_0^r(dr)
\end{aligned}
\end{equation*}
Therefore, there exists \(\delta t> 0\) and \(\rho_m > 1\) such that

\begin{itemize}
\item
  for all \(\rho\in \mathcal{SL}_{t_0,\delta t}\) satisfying \(\Vert \rho\Vert _{t_0,\delta t}\leq \rho_m\) we
  have \(\Vert \Phi(\rho)\Vert _{t_0,\delta t}\leq \rho_m\);
\item
  \(\Phi\) is a contraction over the ball $\mathcal B_{t_0,\delta t,\rho_m} = \{\rho \in
\mathcal{SL}_{t_0,\delta t}\vert ~\Vert \rho\Vert _{t_0,\delta t}\leq \rho_m\}$.
\end{itemize}
By Banach fixed-point theorem, the map \(\Phi\) has a unique fixed-point
\(r_\infty\in \mathcal B_{t_0,\delta t,\rho_m}\).  Therefore, for all \(t_0\in \RR\)
and \(\nu_0\in \mathcal P_2(\RR\times \Theta)\), there exists an interval \(\mathcal I\) containing \(t_0\) and a function
\(r_\infty(t_0,\nu_0)\) satisfying the equation (\ref{eq:org1aec3cd}) for all
\((t,r_0,\theta)\in \mathcal I\times \RR\times \Theta\).

\underline{Propagation of the moment of the initial distribution}

Let \((r_{\mathcal I},\mathcal I)\) a solution of the equation (\ref{eq:org1aec3cd}) over
an interval \(\mathcal I\) containing \(t_0\). Then let us prove that for all \(t\in
\mathcal I\),
\begin{equation}
\Vert r_{\mathcal I}(t)\Vert  = \sup\left\{\frac{\vert r_{\mathcal I}(t,r_0,\theta)\vert }{r^*+\vert r_0\vert },(r_0,\theta)\in \RR
\times \Theta\right\}<+\infty
\end{equation}
We have the following inequality on the flow \(r_{\mathcal I}\) for all \(t\in
\mathcal I\) such that for all \(t\geq t_0\), $r\in \RR$, \(\theta \in\) \(\Theta\):
\begin{equation}
\vert r_{\mathcal I}(t,r_0,\theta)\vert \leq \vert r_0\vert +\gamma\int_{t_0}^t\left[\log\left(
\frac{S}{s_m}\right)\left(1+\int_{\RR\times \Theta}\frac{\vert r_{\mathcal I}(\tau,
r_0',\theta')\vert }{R_M}\nu_0(dr_0',d\theta')\right) + \vert r_{\mathcal I}(\tau,r_0,
\theta)\vert \right]d\tau
\end{equation}
We therefore need to upper-bound the first-order moment.
\begin{equation}
\begin{aligned}
&\int_{\RR\times \Theta}\vert r_{\mathcal I}(t,r_0,\theta)\vert \nu_0(dr_0,d\theta)\leq
\int_{\RR}\vert r_0\vert \nu_0^r(dr_0) + (t-t_0)\int_{[s_m;s_me^{R_M}]\times [0;
\gamma_M]}\!\!\!\!\!\!\!\!\!\!\!\!\!\!\!\!\!\!\!\!\!\!\!\!\!\!\!\!\!\!\!\!\!\!\gamma\log\left(S/s_m\right)\nu_0^{S,\gamma}(dS,d\gamma)\\
&+2\gamma_M\int_{t_0}^t\int_{\RR\times \Theta} \vert r_{\mathcal I}(\tau,r_0,\theta)
\vert \nu_0(dr_0,d\theta)d\tau
\end{aligned}
\end{equation}
By Grönwall lemma, there exists \(\bar R>0\), a constant not depending on $r_0$ and $t$, such that
\begin{equation}
\int_{\RR\times \Theta}\vert r_{\mathcal I}(t,r_0,\theta)\vert \nu_0(dr_0,d\theta)\leq
\left(\int_{\RR}\vert r_0\vert \nu_0^r(dr_0)+\bar R \right)e^{2\gamma_M(t-t_0)}-\bar R
\end{equation}
By applying Grönwall lemma a second time on \(\vert r_{\mathcal I}(t,r_0,\theta)\vert \), we
obtain that there exists constants \(\bar r_1,\bar r_2,\bar r_3 > 0\) such that, for all \(t\in
\mathcal I\cap[t_0;+\infty), r_0\in \RR,\theta\in \Theta\),
\begin{equation}
\label{eq:orgda9aba2}
\vert r_{\mathcal I}(t,r_0,\theta)\vert \leq (\vert r_0\vert +\bar r_1)e^{\gamma_M(t-t_0)}+
\bar r_2e^{2\gamma_M(t-t_0)}+\bar r_3
\end{equation}
So \(\Vert r_{\mathcal I}(t)\Vert <+\infty\). We apply the same reasoning to extend this
inequality for all \(t\in \mathcal I\cap (-\infty;t_0]\). In particular, we have,
for all \(t\in \mathcal I\),
\begin{equation}
\int_{\RR\times \Theta}\vert r_{\mathcal I}(t,r_0,\theta)\vert ^2\nu_0(dr_0,d\theta)<\infty
\end{equation}

\underline{Local uniqueness}

We consider the function on the space of continous functions
\begin{equation}
\rho\in \mathcal C^0(\RR\times \Theta\rightarrow\RR)\longmapsto \Vert \rho\Vert  =
\sup\left\{\frac{\vert \rho(r,\theta)\vert }{r^*+\vert r\vert },(r,\theta)\in \RR\times \Theta\right\}
\end{equation}
which is a norm over the Banach space \(\mathcal{SL} = \{\rho \in \mathcal C^0(\RR
\times \Theta\rightarrow \RR)\vert ~\Vert \rho\Vert <+\infty\}\).
Let \((r_{\mathcal I},\mathcal I)\) and \((r_{\mathcal J},\mathcal J)\) be two
solutions of the equation (\ref{eq:org1aec3cd}) on the the intervals \(\mathcal I\) and
\(\mathcal J\) respectively, associated with the same initial data \((t_0,\nu_0)\),
with \(t_0\in \mathcal I\cap \mathcal J = \mathcal J\subset \mathcal I\). Then we
can prove that the restriction \(r_{\mathcal I\vert \mathcal J}\) of \(r_{\mathcal I}\)
is equal to \(r_{\mathcal J}\). Let us consider the interval
\(\mathcal J' = \{t\in
\mathcal J\vert  t\geq t_0,\forall \tau\in [t_0;t],~r_{\mathcal I}(\tau) =
r_{\mathcal J}(\tau)\}\) and let \(t^* = \sup\mathcal J'\). Then \(t^* \geq \delta t>
0\)  by
uniqueness of the fixed point of the map \(\Phi\) previously defined. If \(t^*<\sup\mathcal J\), then
\(t^*\) is finite, and we obtain by continuity of the norm \(\Vert \cdot\Vert \) that
\(r_{\mathcal I}(t^*) = r_{\mathcal J}(t^*)\). Then for any \(\delta t^*> 0\) such
that \(t^*+\delta t^*\in \mathcal J\), there exists \(K(t^*,\delta t^*)>0\)
verifying for all \(t\in [t^*;t^*+\delta t^*]\) 
\begin{equation*}
\Vert r_{\mathcal I}(t) - r_{\mathcal J}(t)\Vert \leq K(t^*,\delta t^*)\int_{t^*}^t\Vert 
r_{\mathcal I}(\tau)-r_{\mathcal J}(\tau)\Vert d\tau
\end{equation*}
It follows that \(r_{\mathcal I}(t) = r_{\mathcal J}(t)\) for all \(t\in
[t^*;t^*+\delta t^*]\), which is in contradiction
with the definition of
\(t^*\). As a consequence, \(t^* = \sup \mathcal J\). We use the same reasoning to
prove that
\begin{equation}
\inf\{t\in \mathcal J\vert  \forall \tau\in [t;t_0],~r_{\mathcal I}(\tau) =
r_{\mathcal J}(\tau)\} = \inf\mathcal J
\end{equation}

The local existence and uniqueness implies the existence and uniqueness of the
maximal solution for equation (\ref{eq:org1aec3cd}) for any initial data
\((t_0,\nu_0)\). This means that there exists a solution and an interval
\((r_{\mathcal I},\mathcal I)\) such that for all \((r_{\mathcal J},\mathcal J)\)
associated with the same initial data, we have \(\mathcal J\subset \mathcal I\).

\underline{Semi-group structure of the flow}

Let \((r_{\mathcal I}(t_0,\nu_0),\mathcal I)\) be the maximal solution of the equation
(\ref{eq:org1aec3cd}) associated with the initial data \((t_0,\nu_0)\).
We use the following notation:
\begin{equation}
r_{\mathcal I}(t_0,\nu_0):(t,r,\theta)\in \mathcal I\times \RR\times \Theta
\mapsto r_{\mathcal I}(t,r,\theta;t_0,\nu_0)\in \RR
\end{equation}
Let \(t_1\in \mathcal I\) and let \(r_{\mathcal I}(t_1,\nu_{t_1})\) be the maximal
solution associated with \((t_1,\nu_{t_1})\) where \(\nu_{t_1}(dr_1,d\theta) = (r_{\mathcal I}(t_1,r_0,\theta;t_0,\nu_0),\theta)\#\nu_0(dr_0,d\theta)\)
is defined as the pushforward measure
from \(t_0\) to \(t_1\) by the flow.
Then, by uniqueness of the maximal solution, it follows that for all \(t\in
\mathcal I\), \(r_0\in \RR\), \(\theta\in \Theta\)
\begin{equation}
r_{\mathcal I}(t,r_0,\theta;t_0,\nu_0) = r_{\mathcal I}(t,r_{\mathcal I}(t_1,
r_0,\theta;t_0,\nu_0),\theta;t_1,\nu_{t_1})
\end{equation}

\underline{Globality of the maximal solution}

Let \((r_{\mathcal I}(0,\nu_0),\mathcal I)\) be the maximal solution associated
with \((0,\nu_0)\). Then let us prove that \(\RR_+\subset \mathcal I\). Let \(t^* =
\sup \mathcal I\). If \(t^*<+\infty\), then
\(t^*\) cannot be in \(\mathcal I\) as it would contradict the maximality of the
solution \((r_{\mathcal I},\mathcal I)\). But even if \(t^*\notin \mathcal I\), we
still obtain a contradiction with the maximality of the solution by considering
the solution associated with the initial data \((t^*,\nu_{t^*})\) where
\begin{equation}
\begin{aligned}
&\nu_{t^*} = (\tilde r_{\mathcal I}(t^*,\cdot,\cdot),\mathrm{Id}_\Theta)\#\nu_0\\
&\text{with }\tilde r_{\mathcal I}(t^*,r_0,\theta) = r_0 +\int_0^{t^*}
\frac{\partial r_{\mathcal I}}{\partial t}(t,r_0,\theta;0,\nu_0)dt
\end{aligned}
\end{equation}
The above integral is absolutely convergent thanks to the upper-bound
(\ref{eq:orgda9aba2}).

\underline{Conclusion}

By making the reverse change of variable \(\displaystyle s_\infty =
s_me^{r_\infty}\), we conclude that there exists a unique function \(s_\infty\)
defined over \(\RR_+\) satisfying the equation (\ref{eq:org5810f15}).
\end{proof}

We define the mean-field distribution of the population as the pushforward
measure of the initial distribution \(\mu_0\) by the mean-field flow.
\begin{equation}
\forall t\geq 0,\quad \mu_\infty(t)(ds_t,d\theta) =
(s_\infty(t,s_0,\theta),\theta)\# \mu_0(ds_0,d\theta)
\end{equation}
Using equation (\ref{eq:org5810f15}), we can prove that the distribution \(\mu_\infty\) is
the unique solution in the sense of the distribution of the following partial
differential equation:
\begin{equation}
\left\{\begin{array}{ll}
         \displaystyle \frac{\partial\mu_\infty}{\partial t}(t,ds,d\theta) + \frac{\partial}{\partial s}\left[\gamma s\left(\log\left(\frac{S}{s_m}\right)\left(1- \int_{\RR_+^*\times \RR^2}\!\!\!\!\!\!\!\!\!\!\!\!C(s,s',\vert x-x'\vert \mu^{s,x}_{\infty}(t,ds',dx')\right)\right)\mu_\infty(t,ds,d\theta)\right] = 0\\
\mu_\infty(0,ds,d\theta) = \mu_0(ds,d\theta)
\end{array}\right.
\end{equation}
Conversely, the distribution of the random variable \(s_\infty(t,s_0,\theta)\),
with \((s_0,\theta)\) being distributed according to \(\mu_0\), is the marginal
\(\mu_\infty^s(t)\). In the next section, we prove the convergence of \(\hat \mu_N(t,z^0_{1:N})\) towards \(\mu_\infty(t)\), which proves in the same time the
convergence of \(\hat s_N(t,s_0,\theta,z^0_{1:N})\) towards
\(s_\infty(t,s_0,\theta)\) when \(N\rightarrow +\infty\).

\subsection{Convergence of the empirical flow towards the mean-field flow}
\label{sec:org970a447}
We establish in this section that the population empirical measure \(\hat \mu_N(t,z^0_{1:N})\) converges almost surely towards \(\mu_\infty(t)\) for the
Wasserstein metric \(\mathcal W_p\), which is defined  (Villani [\ref{orgb06d6e2}]),
for any distributions \(\mu_1,\mu_2\in \mathcal P(\mathcal Z)\), as
\begin{equation}
\mathcal W_p(\mu_1,\mu_2)^p = \inf\left\{\iint_{\mathcal Z^2}m_{\mathcal Z}(z_1,z_2)^p\pi(dz_1,dz_2),\quad \pi\in \Pi(\mu_1,\mu_2)\right\}
\end{equation}
where \(m_{\mathcal Z}\) is a metric defined over the space \(\mathcal Z\) and
\(\Pi(\mu_1,\mu_2)\) is the set of couplings between the distributions
\(\mu_1,\mu_2\), i.e.,
\begin{equation}
\Pi(\mu_1,\mu_2) = \left\{\pi(dz_1,dz_2)\in \mathcal P(\mathcal Z^2)\vert ~\pi^{z_1}(dz_1) = \mu_1(dz_1)\text{ and }\pi^{z_2}(dz_2) = \mu_2(dz_2)\right\}
\end{equation}
The convergence of \(\hat \mu_N0,z^0_{1:N})\rightarrow \mu_0\) is propagated to
any time using the argument refered to in the literature as Dobrushin's
stability [\ref{orga691432}].

\begin{theorem}
Let \(\mu_0\in \mathcal P(\RR_+*\times \Theta)\) such that \(\displaystyle
\int_{\RR^2}\vert x\vert ^2\mu_0^x(dx)< +\infty\) and such that there exists
\(s_0^{\min}, s_0^{\max},S_m\) satisfying
\begin{equation}
\begin{aligned}
&s_m<s_0^{\min}<s_0^{\max}<S_m<s_me^{R_M}\\
&\proba\left\{s_0\sim \mu_0^s\vert s_0^{\min}<s_0<s_0^{\max}\right\} = 1\\
&\proba\left\{S\sim \mu_0^S\vert S_m<S<s_me^{R_M}\right\} = 1
\end{aligned}
\end{equation}
We consider a sequence \((s_n^0,x_n,S_n,\gamma_n)_{n\in \N^*}\) of independant and
identically distributed sample of the initial distribution \(\mu_0\). We build
from this sequence the sequence of empirical measures \((\hat \mu_N)_{N\in \N^*}\)
defined for all \(N\geq 1\) by equation (\ref{eq:org6c37da2}). Then we have the
following almost sure convergence:
\begin{equation}
\proba\left\{(s_n^0,x_n,S_n,\gamma_n)_{n\in \N^*}\sim \mu_0^{\otimes \infty}\vert \forall t\geq 0,~\lim_{N\rightarrow \infty}\mathcal W_1\left(\hat \mu_N(t),\mu_\infty(t)\right) = 0\right\}=1
\end{equation}

\end{theorem}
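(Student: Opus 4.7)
The plan is to interpose the hybrid measure
\begin{equation*}
\tilde\mu_N(t)(ds,d\theta) = (s_\infty(t,s_0,\theta),\theta)\#\hat\mu_N(0,z^0_{1:N})(ds_0,d\theta),
\end{equation*}
namely the push-forward of the empirical initial data by the deterministic mean-field flow rather than by the empirical flow $\hat s_N$. The triangle inequality gives
\begin{equation*}
\mathcal W_1(\hat\mu_N(t),\mu_\infty(t)) \leq \mathcal W_1(\hat\mu_N(t),\tilde\mu_N(t)) + \mathcal W_1(\tilde\mu_N(t),\mu_\infty(t)).
\end{equation*}
The first term is a ``same initial data, two flows'' contribution, expected to be $\mathcal O(1/N)$ from the $N/(N-1)$ correction and the diagonal self-interaction term in $\hat C_N$. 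The second is a ``same flow, two initial data'' contribution, to be controlled by a Dobrushin-type stability inequality combined with the classical almost-sure convergence $\mathcal W_1(\hat\mu_N(0,z^0_{1:N}),\mu_0)\to 0$ at time $0$.

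\textbf{Dobrushin stability of the mean-field flow.} Working in the logarithmic variable $r=\log(s/s_m)$ as in the proof of the previous theorem, I would establish that there exists $K(t)>0$, locally bounded in $t$, such that for any pair of initial distributions $\nu_0,\nu_0'$ satisfying the support hypotheses and any coupling $\pi_0\in\Pi(\nu_0,\nu_0')$, the flow pushes $\pi_0$ forward into a coupling of $\nu_t$ and $\nu_t'$ whose integrated $(r,x,S,\gamma)$-distance is bounded by $e^{K(t)}$ times the initial one. This follows from a Grönwall argument applied to $\delta(t,r_0,\theta,r_0',\theta')=|r_\infty(t,r_0,\theta;0,\nu_0)-r_\infty(t,r_0',\theta';0,\nu_0')|$, using that $C_r$ is globally Lipschitz in all its arguments on the relevant compact. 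Taking the infimum over $\pi_0$ yields
\begin{equation*}
\mathcal W_1(\tilde\mu_N(t),\mu_\infty(t))\leq e^{K(t)}\mathcal W_1(\hat\mu_N(0,z^0_{1:N}),\mu_0),
\end{equation*}
and the right-hand side vanishes almost surely by Varadarajan's theorem: the $(s_0,S,\gamma)$-marginals are compactly supported by assumption, and the assumption $\int|x|^2\mu_0^x(dx)<\infty$ provides the uniform integrability needed on the $x$-coordinate.

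\textbf{Control of the empirical-flow perturbation.} For the first term, I would compare directly the ODEs satisfied by $\hat s_N(t,s_0,\theta;z^0_{1:N})$ and $s_\infty(t,s_0,\theta)$ started from the same $(s_0,\theta)$ but with $\mu_0$ replaced by $\hat\mu_N(0,z^0_{1:N})$ in the mean-field case. The two right-hand sides differ only by the $N/(N-1)$ factor and the $-C(\cdot,\cdot,0)/(N-1)$ diagonal correction, both bounded by an absolute constant divided by $N-1$ thanks to the uniform size bounds. A further Grönwall estimate on the log-sizes then gives
\begin{equation*}
\sup_{(s_0,\theta)}|\hat s_N(t,s_0,\theta;z^0_{1:N})-s_\infty(t,s_0,\theta)|=\mathcal O(1/N)
\end{equation*}
uniformly on each compact time interval, whence $\mathcal W_1(\hat\mu_N(t),\tilde\mu_N(t))=\mathcal O(1/N)$. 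Summing the two bounds gives the almost-sure convergence at each fixed $t$; to upgrade this to ``for all $t\geq 0$'' almost surely, I would use the $\mathcal W_1$-continuity of $t\mapsto\mu_\infty(t)$ and $t\mapsto\hat\mu_N(t)$ (following from the uniform-in-$(s_0,\theta)$ boundedness of $\partial_t s_\infty$ and $\partial_t \hat s_N$) to restrict attention to a countable dense set of times, outside of which the exceptional null set is fixed.

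\textbf{Main obstacle.} The critical step of the whole argument is securing a uniform-in-$t$ Lipschitz constant $K(t)$ for the mean-field flow: this is not automatic because $C_r$ contains the unbounded factor $r'$. This is exactly why the theorem imposes the strict support conditions $s_m<s_0^{\min}$ and $S_m<s_m e^{R_M}$: combined with the confinement estimates derived in the preceding existence theorem, they pin $r_\infty(t,\cdot,\cdot)$ inside a compact subinterval of $(0,R_M)$ for all $t\geq 0$, on which $C_r$ is globally Lipschitz in all its arguments and the Dobrushin contraction closes. Once this is settled, the two Grönwall estimates combine cleanly and the probabilistic content of the theorem reduces to the classical almost-sure convergence of i.i.d.\ empirical measures in the Wasserstein metric.
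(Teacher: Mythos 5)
Your proposal is essentially correct, but it is organized along a genuinely different route than the paper's. You split the error by a triangle inequality through the mean-field dynamics started from the empirical initial datum: a deterministic ``same data, two dynamics'' term of order \(\mathcal O(1/N)\) (coming from the \(N/(N-1)\) normalization and the diagonal self-interaction in \(\hat C_N\)), plus a Dobrushin-type \(\mathcal W_1\)-stability estimate of the mean-field flow with respect to its initial distribution, concluded by Varadarajan's theorem. The paper does not use this decomposition: it fixes one coupling \(\pi_0\in\Pi(\hat\mu_N(0,z^0_{1:N}),\mu_0)\), pushes it forward by the pair (empirical flow, mean-field flow), and runs a single Grönwall estimate on \(D^{\pi_0}_N(t)=\iint\vert\hat s_N(t,s_1,\theta_1)-s_\infty(t,s_2,\theta_2)\vert\,d\pi_0\), so that the \(\mathcal O(1/(N-1))\) terms and the initial marginal Wasserstein distances appear inside one inequality (\ref{eq:org34d318e}), and the conclusion follows from the law of large numbers for the functionals \(A,B\) together with Varadarajan's theorem. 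Your route is the classical Dobrushin--Golse argument; it cleanly separates the deterministic \(1/N\) discrepancy from the statistical error, and since \(u\mapsto (1+u^2/\sigma_x^2)^{-1}\) is globally Lipschitz it can be closed with only \(\mathcal W_1\)-control of the \(x\)-marginal, whereas the paper's derivation passes through \(\mathcal W_2(\hat\mu_N^x(0),\mu_0^x)\) and moment functionals of the empirical measure. Conversely, the paper's single-coupling bound is explicit and quantitative for every \(t\) on one event of full probability; note that your bound has the same feature (your constants are deterministic and your only random quantity is \(\mathcal W_1(\hat\mu_N(0,z^0_{1:N}),\mu_0)\)), so the countable-dense-times step at the end is superfluous.

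Two points to tighten. First, your displayed definition of \(\tilde\mu_N(t)\) uses the flow \(s_\infty\) driven by \(\mu_0\); with that definition the first term is \emph{not} \(\mathcal O(1/N)\), since it still contains the statistical error. What your argument actually uses (and what the later text indicates) is the self-consistent mean-field flow driven by \(\hat\mu_N(0,z^0_{1:N})\), which exists and is unique because the empirical measure satisfies the hypotheses of the existence theorem; state it that way. Second, the confinement you invoke need not place \(r_\infty\) in a compact subinterval of \((0,R_M)\) uniformly in time: sizes may decay towards \(s_m\), i.e.\ \(r\) may approach \(0\). What is needed, and what the support assumptions do give (by a bootstrap argument in the spirit of Proposition \ref{orgcab5114}, which the paper also uses implicitly through its derivative bounds on \(C\)), is that the population log-sizes remain in \([0,R_M]\); since \(C_r\) is smooth near \(r=0\), the Lipschitz constants do not degenerate there and your Grönwall argument closes as claimed.
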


\begin{proof}
Let \(z_{1:N}^0\in ([s_0^{\min};s_0^{\max}]\times \Theta)^N\) be an initial
configuration. Let \(\pi_s\in \Pi\left(\hat \mu^s_N(0,z^0_{1:N}),\mu_0^s \right)\), \(\pi_x\in \Pi\left(\hat \mu_N^x(0,z^0_{1:N}),\mu_0^x\right)\),
  \(\pi_S\in \Pi\left(\hat \mu_N^S(0,z^0_{1:N}),\mu_0^S\right)\) and
  \(\pi_\gamma\in \Pi \left(\hat \mu_N^\gamma(0,z^0_{1:N}),\mu_0^\gamma\right)\)
  be a set of couplings associated with the marginal distributions of the
  empirical distribution and the initial distribution. From these couplings, we
  build the initial coupling \(\pi_0 = \pi_s\otimes \pi_x\otimes \pi_S\otimes
  \pi_\gamma\), which is in \(\Pi\left(\hat \mu_N(0,z^0_{1:N}),\mu_0\right)\).

We consider the empirical flow \(\hat s_N\) associated with the empirical
population measure \(t\mapsto \hat \mu_N(t,z^0_{1:N})\). For any \(t\geq 0\),
we define the coupling
\begin{equation}
\begin{aligned}
\pi_t(d\hat s_t,d\theta_1,ds_t,d\theta_2) &= (\hat s_N(t,\hat s_0,\theta_1,z^0_{1:N}),\theta_1,s_\infty(t,s_0,\theta_2),\theta_2)\#\pi_0(d\hat s_0,d\theta_1,ds_0,d\theta_2)\\
&\in \Pi\left(\hat \mu_N(t,z^0_{1:N}),\mu_\infty(t)\right)
\end{aligned}
\end{equation}
The space \(\RR_+^*\times \Theta\) is endowed with the metric \(m_{\mathcal Z}\)
defined by
\begin{equation}
  \begin{aligned}
    &m_{\mathcal Z}(s_1,\theta_1,s_2,\theta_2) = \frac{\vert s_1-s_2\vert +\vert S_1-S_2\vert }{s_m}+
    \frac{\vert x_1-x_2\vert }{\ell}+\tau_r\vert \gamma_1-\gamma_2\vert \\
    &= \frac{\vert s_1-s_2\vert }{s_m}+m_\Theta(\theta_1,\theta_2)
  \end{aligned}
\end{equation}
where \(\ell,\tau_r>0\) are arbitrary constants. For this metric and for any
 time \(t\geq 0\), the Wasserstein distance between distributions \(\hat \mu_N(t,
  z^0_{1:N})\) and \(\mu(t)\) is expressed as follows
\begin{equation}
  \begin{aligned}
    \mathcal W_1(\hat \mu_N(t,z^0_{1:N}),\mu(t)) = \inf&\left\{
    \iint_{(\RR_+^*\times \Theta)^2}\!\!\!\!\!\!\!\!\!\!\!\!\!m_{\mathcal Z}(s_1,\theta_1,s_2,\theta_2)
    \pi(d s_1,d \theta_1,d s_2,d \theta_2),\right.\\
    &\left.\phantom{\int}\pi\in \Pi\left(\hat \mu_N(t,z^0_{1:N}),\mu(t)\right)
    \right\}\\      
  \end{aligned}
\end{equation}
The coupling \(\pi_t\) provides an upper-bound of the Wasserstein distance at
time \(t\).
\begin{equation}
\label{eq:orga369bea}
  \begin{aligned}
    &\mathcal W_1(\mu_N(t,z_{1:N}^0),\mu(t))\leq \iint_{(\mathcal \RR_+^*
      \times \Theta)^2}\left(\frac{\vert s_1-s_2\vert }{s_m}+m_\Theta(\theta_1,\theta_2)
    \right)\pi_t(d s_1,d\theta_1,d s_2,d \theta_2)\\
    &\leq \frac{1}{s_m}\iint_{(\RR_+^*\times \Theta)^2}\!\!\!\!\!\!\!\!\!\!\!\!\!\vert \hat s_N(t,s_1,
    \theta_1)-s_\infty(t,s_2,\theta_2)\vert \pi_0(d s_1,d\theta_1,d s_2,
    d \theta_2)\\
    &+\iint_{\Theta^2}m_\Theta(\theta_1,\theta_2)\pi_0^\theta(d \theta_1,
    d \theta_2)
  \end{aligned}
\end{equation}
Let us focus on the first term of the upper-bound.
\begin{equation}
  \begin{aligned}
    &D^{\pi_0}_N(t) = \iint_{(\RR_+^*\times \Theta)^2}\!\!\!\!\!\!\!\!\!\!\!\!\!\!\!\!\vert \hat s_N(t,s_1,\theta_1)
    -s_\infty(t,s_2,\theta_2)\vert \pi_0(d s_1,d \theta_1,d s_2,d
    \theta_2)
  \end{aligned}
\end{equation}
As detailed in the appendix, section \ref{org776979e}, we can prove that
\begin{equation}
\label{eq:org34d318e}
    \begin{aligned}
      &D^{\pi_0}_N(t)\leq \frac{s_0^{\max}e^{R_M}R_M}{N-1}+\frac{tA(\hat
        \mu_N(0,z^0_{1:N}))}{N-1}+e^{R_M}\iint_{(\RR_+^*)^2}\vert s_1-s_2\vert \pi_0^s(
      d s_1,d s_2) \\
      &+ tB(\hat \mu_N^x(0,z^0_{1:N}),\mu_0^x)\sqrt{\iint_{\RR^4}\vert x_1-x_2\vert ^2
        \pi_0^x(d x_1,d x_2)}\\
      &+\alpha_S\iint_{[S_m;s_me^{R_M}]^2}\vert S_1-S_2\vert \pi_0^S(d S_1,
      d S_2)\\
      &+\alpha_\gamma t\iint_{[0;\gamma_M]^2}\vert \gamma_1-\gamma_2\vert \pi_0^\gamma(
      d \gamma_1,d \gamma_2)+\beta_N\int_0^tD^{\pi_0}_N(\tau)d \tau
    \end{aligned}
  \end{equation}
where the functionals \(A(\mu)\), \(B(\mu_1,\mu_2)\) have the following
expressions:
\begin{equation}
  \begin{aligned}
    &A(\mu) = \frac{1}{2R_M}\int_{\RR_+^*\times [S_m;s_me^{R_M}]\times
      [0;\gamma_M]}\!\!\!\!\!\!\!\!\!\!\!\!\!\!\!\!\!\!\!\!\!\!\!\!\!\!\!\!\!\!\!\!\!\!\!\!\!\!\!\!\!\!\!\!\!\!\!\left(\gamma Ss/s_m\log(s/s_m)+s_0^{\max}e^{R_M}R_M
    \gamma\log(S/s_m)\right)\mu^{s,S,\gamma}(d s,d S,d \gamma)\\
    &B(\mu_1,\mu_2) = \frac{s_0^{\max}e^{R_M}R_M\gamma_M}{\sigma_x^2}\left(
    2\int_{\RR^2}\vert x\vert \mu_2(d x)+\sqrt{2\int_{\RR^2}\vert x\vert ^2\mu_2(d x)+
      2\int_{\RR^2}\vert x\vert ^2\mu_1(d x)}\right.\\
    &\left.+\int_{\RR^2}\sqrt{2\int_{\RR^2}\vert x\vert ^2\mu_1
      (d x)+2\int_{\RR^2}\vert x\vert ^2\mu_1(d x)-4x'.\left(\int_{\RR^2}x
      \mu_1(d x)+\int_{\RR^2}x\mu_2(d x)\right)}\mu_1(d x')\right)
  \end{aligned}
\end{equation}
and the coefficients \(\alpha_S,\alpha_\gamma,\beta_N\) are
\begin{equation}
\label{eq:org36cbf9f}
  \begin{aligned}
    &\alpha_S = (s^0_{\max}/S_m)\\
    &\alpha_\gamma = s^0_{\max}\log\left(\frac{s_0^{\max}}{s_m}\right)e^{R_M}+
    s^0_{\max}e^{R_M}R_M\\
    &\beta_N = \frac{s_0^{\max}e^{R_M}R_M\gamma_M}{s_m^N\sigma_r}(1+\sigma_r/
    R_M+1/2)
  \end{aligned}
\end{equation}
We can find in the appendix, section \ref{org776979e}, the detailed
derivation of the upper-bound
of \(D^{\pi_0}_N(t)\) in inequation (\ref{eq:org34d318e}). By Grönwall lemma, we
obtain that
\begin{equation}
\label{eq:org2901026}
  \begin{aligned}
    &D^{\pi_0}_N(t)\leq \frac{1}{\beta_N}\left(\frac{A(\hat \mu_N(0,z^0_{1:N}))}{N-1}+B(\hat \mu_N^x(0,z^0_{1:N}),\mu_0^x)\sqrt{\iint_{\RR^4}\vert x_1-x_2\vert ^2\pi_0^x(d x_1,d x_2)}\right.\\
    &\left.+\alpha_\gamma\iint_{[0;\gamma_M]^2}\vert \gamma_1-\gamma_2\vert \pi_0^\gamma(d \gamma_1,d \gamma_2)\right)\left(e^{\beta_N t}-1\right)+\left(\frac{s_0^{\max}e^{R_M}R_M}{N-1}\right.\\
    &+e^{R_M}\iint_{[s_0^{\min};s_0^{\max}]^2}\vert s_1-s_2\vert \pi_0^s(d s_1,d s_2)\left.+\alpha_S\iint_{[s_m;s_me^{R_M}]^2}\vert S_1-S_2\vert \pi_0^S(d S_1,d S_2)\right)e^{\beta_Nt}
  \end{aligned}
\end{equation}
By gathering the terms from inequalities (\ref{eq:org2901026}) and (\ref{eq:orga369bea}), we obtain the following upper-bound on the Wasserstein
distance between the empirical distribution and the mean-field distribution
at time \(t\)
\begin{equation}
  \begin{aligned}
    &\mathcal W_1(\hat \mu_N(t,z^0_{1:N}),\mu(t))\leq \frac{e^{R_M}}{s_m}\mathcal
    W_1(\hat \mu_N^s(0,z^0_{1:N}),\mu_0^s)e^{\beta_Nt}\\
    &+\left(\frac{B(\hat\mu_N^x(0,z^0_{1:N}),\mu_0^x)}{s_m\beta_N}\left(
    e^{\beta_Nt}-1\right)+\frac{1}{\ell}\right)\mathcal W_2(\hat \mu_N^x(0,
    z^0_{1:N}),\mu_0^x))\\
    &+\frac{\left(\alpha_Se^{\beta_Nt}+1\right)}{s_m}\mathcal W_1(\hat \mu_N^S(0,
    z^0_{1:N}),\mu_0^S)+\left(\frac{\alpha_\gamma}{s_m}\left(e^{\beta_N t}-1
    \right)+\tau_r\right)\mathcal W_1(\hat \mu_N^\gamma(0,z^0_{1:N}),
    \mu_0^\gamma)\\
    &+\frac{1}{s_m(N-1)}\left(\frac{A_N(\hat\mu_N(0,z^0_{1:N}),\mu_0)}{
      \beta_N}(e^{\beta_Nt}-1)+s_0^{\max}e^{R_M+\beta_Nt}R_M\right)
  \end{aligned}
\end{equation}
From the law of large numbers, we have the following almost sure convergences
\begin{equation}
  \begin{aligned}
    &\proba(\Omega_A) = \proba\left\{z^0_{1:N}\sim \mu_0^{\otimes \infty}\mid
    \lim_{N\rightarrow\infty}A(\hat\mu_N(0,z^0_{1:N})) = A(\mu_0)
    \right\}=1\\
    &\proba(\Omega_B) = \proba\left\{z^0_{1:N}\sim \mu_0^{\otimes \infty}\mid
    \lim_{N\rightarrow \infty}B(\hat\mu^x_N(0,z^0_{1:N}),\mu_0^x) = B(\mu_0^x,
    \mu_0^x)\right\}=1
  \end{aligned}
\end{equation}
From Varadarajan's theorem [\ref{orgbaf95a3}], we have also
\begin{equation}
  \begin{aligned}
    &\proba(\Omega_s) = \proba\left\{z^0_{1:N}\sim \mu_0^{\otimes \infty}\mid
    \lim_{N\rightarrow \infty}\mathcal W_1(\hat \mu_N^s(0,z^0_{1:N}),\mu_0^s) = 0
    \right\} = 1\\
    &\proba(\Omega_x)=\proba\left\{z^0_{1:N}\sim \mu_0^{\otimes \infty}\mid
    \lim_{N\rightarrow \infty}\mathcal W_2(\hat \mu_N^x(0,z^0_{1:N},\mu_0^x))=0
    \right\}=1\\
    &\proba(\Omega_S) = \proba\left\{z^0_{1:N}\sim \mu_0^{\otimes \infty}\mid
    \lim_{N\rightarrow \infty}\mathcal W_1(\hat \mu_N^S(0,z^0_{1:N}),\mu_0^S) = 0
    \right\}=1\\
    &\proba(\Omega_\gamma) = \proba\left\{z^0_{1:N}\sim \mu_0^{\otimes \infty}
    \mid\lim_{N\rightarrow\infty}\mathcal W_1(\hat \mu_N^\gamma(0,z^0_{1:N}),
    \mu_0^\gamma)=0\right\}=1
  \end{aligned}
\end{equation}
As the intersection of all these events is of probability one, and as
\((\beta_N)_{N\geq 1}\) is convergent, we obtain the result we want to prove.
\end{proof}

\subsection{Simulation of the mean-field model}
\label{sec:org03172c0}
\label{orgeb2d575}

In this section, we detail a methodology to obtain numerical approximations of
the mean-field limit distribution in the specific case of the competition in the
BPDL system. An approximation of the mean-field characteristic flow is built by
estimating the competition potential. The time-evolution of the competition
potential is approximated by a piecewise constant function, and the spatial
dependency of the competition potential is learned by a parametric model. The
conditions of consistency of the numerical scheme are only considered
qualitatively. Consistent numerical methods to solve non-local transport
equations in low dimensions can be found in Carrillo et al. [\ref{org56fc075}],
Lafitte et al. [\ref{org8529c61}], Lagoutière and Vauchelet [\ref{org46ed14b}], but
these methods cannot be used directly in our case, due to the high dimension of
the phase space (which is equal to 5 in our case).

The expression of the mean-field flow as a function of the
competition potential is, for any \(\theta = (x,S,\gamma)\in \Theta\):
\begin{equation}
  \begin{aligned}
    &s_\infty(t,s,\theta) = s_m\left(\frac{s}{s_m}\right)^{e^{-\gamma t}}\exp
    \left(\gamma \log\left(\frac{S}{s_m}\right)\int_0^t(1-
    C_\infty(\tau,s,\theta))e^{\gamma (\tau-t)}d \tau\right)\\
    &\text{where }C_\infty(t,s,\theta) = \expect\left\{
    C(s_\infty(t,s,\theta),s_\infty(t,s',\theta'),\vert x-x'\vert ),~(s',\theta')\sim\mu_0
    \right\}\\
&\text{i.e.,}= \int_{\RR_+^*\times \Theta}\!\!\!\!\!\!\!\! C(s_\infty(t,s,\theta),s_\infty(t,s',\theta'),\vert x-x'\vert)\mu_0(ds',d\theta')
  \end{aligned}
\end{equation}
Let \(\{t_0 = 0,t_1,...,t_M = T\}\) be a subdivision of the observation interval
\([0;T]\) with regular time-step \(\Delta t\). We consider a piecewise constant
approximation of the competition potential.
\begin{equation}
\label{eq:orgb4caa17}
  \begin{aligned}
    &\hat s_\infty(t,s,\theta) = s_m\left(\frac{s}{s_m}\right)^{e^{-\gamma t}}
    \left(\frac{S}{s_m}\right)^{1-e^{-\gamma t}-\hat C_\infty(t,s,\theta)}\\
    &\hat C_\infty(t,s,\theta) = \gamma\sum_{k=0}^{M-1}\int_0^tC_k(s,\theta)
    \mathbb I\{t_k\leq t<t_{k+1}\}e^{\gamma(\tau-t)}d \tau
  \end{aligned}
\end{equation}
The \(C_k(s,\theta)\) are approximations of the competition potential exerted on a
plant of initial size \(s\) and of parameter \(\theta\) at time \(t_k\).

\underline{Approximation of the initial competition potential}

Let us build these approximations step by step. We start by the initial
competition potential, which has the following expression.
\begin{equation}
  \begin{aligned}
    C_\infty(0,s,\theta) = \expect\left\{\frac{\log(s'/s_m)}{2R_M\left(1+
      \frac{\vert x-x'\vert ^2}{\sigma_x^2}\right)}\left(1+\tanh\left(\frac{1}{
      \sigma_r}\log(s'/s)\right)\right),~(s',x')\sim \mu_0^{s,x}\right\}
  \end{aligned}
\end{equation}
In particular, we can notice that initially this potential does not depend on
parameters \(S\) and \(\gamma\). This expectation is not analytical in general, and
a natural way to obtain a consistent approximation of it is by resorting to
Monte-Carlo integration.

\begin{equation}
\label{eq:org994413a}
  \tilde C_\infty(0,s,x;s'_{1:N},x'_{1:N}) = \frac{1}{N}\sum_{i=1}^N
  C(s,s_i',\vert x-x_i'\vert )
\end{equation}
where \(s'_{1:N},x'_{1:N}\) are sampled from \(\mu_0^{s,x}\). This approximation is
more or less equivalent to simulating the microscopic dynamics that we know to
tend towards the mean-field dynamics. By doing so, there is not really any
computational advantage in the use of the mean-field flow, as it appears only as
an individual trajectory within a large enough population. One way to get rid of
the dependency with respect to the sample is to construct a parametric
approximation of the map in equation (\ref{eq:org994413a}). We can consider the
parametric family consisting of polynomial functions of some bounded
transformations of the variables \(s,x\).

\begin{equation}
\label{eq:orgbf76e42}
  \begin{aligned}
    &\mathfrak C_0 = \left\{(s,x,y,\beta)\in [s_0^{\min};s_0^{\max}]\times \RR^2
    \times \RR^{n(3,d)}\mapsto \beta.f^3_d(s,x,y),~d\in \N \right\}\\
    &\text{where }\forall d\in \N,~f^3_d(s,x,y) = \frac{q^3_d\left(\log(s/s_m),
      \arctan\left(\frac{x-\mu_x}{L_x}\right),\arctan\left(\frac{y-\mu_y}{L_y}
      \right)\right)}{1+\frac{(x-\mu_x)^2+(y-\mu_y)^2}{\sigma_x^2}}
  \end{aligned}
\end{equation}
In the above equation \(q^3_d\) is the polynomial feature function of three
variables and of degree \(d\). For instance, the polynomial feature function of
degree 2 with two variables is
\begin{equation}
  q^2_2(x_1,x_2) = (1,x_1,x_1^2,x_2,x_1x_2,x_2^2)\in \RR^6
\end{equation}
More generally, we denote by \(q^k_d\) the polynomial feature function with \(k\)
variables and of degree \(d\).
\begin{equation}
  \begin{aligned}
    &q^k_d(x_{1:k}) = \left(x_1^{\alpha_1}...x_k^{\alpha_k}
    \right)_{\alpha_{1:k}\in \mathrm{A}^k_d}\\
    &\mathrm{A}^k_d = \left\{\alpha_{1:k}\in \N^k\mid \alpha_1+...+\alpha_k\leq
    d\right\}
  \end{aligned}
\end{equation}
The cardinality of \(\mathrm{A}^k_d\) is a classical result of combinatorics.
\begin{equation}
  \mathrm{Card}(\mathrm{A}^k_d) = \sum_{\ell = 0}^d\binom{k+\ell-1}{k-1} = n(k,d)
\end{equation}
In equation (\ref{eq:orgbf76e42}), we have represented the position variable \((x,y)\)
by the bijective transformation \(\displaystyle \left(\arctan\left(\frac{x-\mu_x}{L_x}\right),\arctan\left(\frac{y-\mu_y}{L_y}\right)\right)\) where \((\mu_x,\mu_y)\) can
be chosen as the mean position, and \(L_x,L_y\) are typical lengths, such as the
standard deviation of \(x\) and \(y\), or the competition parameter \(\sigma_x\). With
this parametrization of the variable, we can express the competition potential
as a continuous function defined over a compact domain, that can be uniformly approximated
by a polynomial function thanks to the Stone-Weierstrass theorem. We also
divide the polynomial function by a factor \(\displaystyle \frac{1}{1+\frac{(x-\mu_x)^2+(y-\mu_y)^2}{\sigma_x^2}}\) to ensure that the approximation
has roughly the same behaviour as the target function when \(\vert x\vert \rightarrow
\infty\).

\(\beta\) is the vector of coefficients of the polynomial function at the
numerator. We can choose \(\beta\) so that it minimizes the quadratic risk between the
target function \(\tilde C_\infty(0,.)\) and the class \(\mathfrak C_0\) for a fixed
degree \(d\).
\begin{equation}
  \begin{aligned}
    &\beta_0^* = \underset{\beta \in \RR^{n(3,d)}}{\mathrm{argmin}}~\expect\{(
    \tilde C_\infty(0,s,x;s_{1:N},x_{1:N})-\beta\cdot f^3_d(s,x))^2,~(s,x)
    \sim\mu_0^{s,x}\},\\
    &\text{which is equivalent to find }\beta_0^*\text{ such that:}\\
    &\expect\left\{f^3_d(s,x)f^3_d(s,x)^\tr,(s,x)\sim\mu_0^{s,x}\right\}\beta_0^*
    = \expect\{\tilde C_\infty(0,s,x)f^3_d(s,x),(s,x)\sim\mu_0^{s,x}\}
  \end{aligned}
\end{equation}
The above linear system is not necessarily invertible, according to the
distribution \(\mu_0\): for instance, if \(\mu_0\) is a Dirac distribution, the
system is of rank 1. However, the system always admits at least one solution.
In practice, we consider a solution of the linear system
\begin{equation}
  \left(\sum_{k=1}^Kf^3_d(s_k,x_k)f^3_d(s_k,x_k)^\tr\right)\beta_0^* =
  \sum_{k=1}^K\tilde C_\infty(0,s_k,x_k)f^3_d(s_k,x_k)
\end{equation}
where \(s_{1:K},x_{1:K}\) is a training set consisting in a sample of \(\mu_0^{s,x}\)
independent of \(s'_{1:N},x'_{1:N}\). The approximation of the initial competition
potential is then
\begin{equation}
\label{eq:orgd75e2b9}
  \begin{aligned}
    &\hat C_0(s,x) = p_{[0;1]}(\beta\cdot f^3_d(s,x))\\
    &\text{where }p_{[0;1]}(x) = \max(\min(1,x),0)
  \end{aligned}
\end{equation}
We have incorporated in this reconstruction our knowledge on the boundedness of
the potential by simply projecting the values of the linear combination into
\([0;1]\). We can assess the relevance of this approximation by computing the
coefficient of determination over a testing set \((s^t_{1:K},x^t_{1:K},
\tilde C^t_{1:K})\).
\begin{equation}
  \begin{aligned}
    &R^2 = 1-\frac{\sum_{k= 1}^K(\tilde C^t_k-\hat C_0(s^t_k,x^t_k))^2}{
      \sum_{k=1}^K(\tilde C^t_k-m_{\tilde C^t})^2}\\
    &m_{\tilde C^t} = \frac{1}{K}\sum_{k=1}^K\tilde C^t_k
  \end{aligned}
\end{equation}
The coefficient of determination is used especially to calibrate the degree \(d\)
of the polynomial approximation, that need to be precise but also light in terms
of computation, as the dimension of the coefficient space \(n(k,d)\) increases like
the factorial function with \(d\).

\underline{Approximation of the subsequent competition potentials}

If the reconstruction is accurate enough, and if the sample size is large
enough, equation (\ref{eq:orgd75e2b9}) enables to sample \(\hat s_\infty(t,s,\theta)\)
for any \(t\) in the subinterval \([0;\Delta t]\), that is close to the actual
mean-field flow \(s_\infty(t,s,\theta)\). We then use the same methodology to
build the next approximation of the competition potential, but we need to
integrate in the arguments of the approximation the parameters \(S,\gamma\), that
have an influence on the competition potential at time \(\Delta t\).
\begin{equation}
  \begin{aligned}
    &\hat s_\infty(\Delta t,s,\theta) = s_m\left(\frac{s}{s_m}\right)^{e^{-\gamma \Delta t}}
    \left(\frac{S}{s_m}\right)^{(1-e^{-\gamma \Delta t})(1-\hat C_0(s,x))}\\
    &\tilde C_\infty(\Delta t,s,\theta;s'_{1:N},\theta'_{1:N}) =
    \frac{1}{N}\sum_{i=1}^NC(\hat s_\infty(\Delta t, s,\theta),\hat s_\infty(\Delta t,s'_i,
    \theta'_i),\vert x-x'_i\vert )
  \end{aligned}
\end{equation}
The class of functions used to approximate the above competition potential has
the same structure as \(\mathfrak C_0\), but with additional arguments.
\begin{equation}
  \begin{aligned}
    &\mathfrak C_1 = \left\{(s,x,y,S,\gamma,\beta)\in [s_0^{\min};s_0^{\max}]\times
    \Theta\times \RR^{n(5,d)}\mapsto \beta\cdot f^5_d(s,x,y,S,\gamma),~d\in \N
    \right\}\\
    &f^5_d(s,x,y,S,\gamma) =
    \frac{q^5_d\left(\log(s/s_m),
      \arctan\left(\frac{x-\mu_x}{L_x}\right),\arctan\left(\frac{y-\mu_y}{L_y}
      \right),\log(S/s_m),e^{-\gamma\Delta t}\right)}
         {1+\frac{(x-\mu_x)^2+(y-\mu_y)^2}{\sigma_x^2}}
  \end{aligned}
\end{equation}
Other choices of transformations for the variables \(S\) and \(\gamma\) are possible.
These specific transformations \(\log(S/s_m)\) and \(e^{-\gamma \Delta t}\) are used
here because they better describe the relationship between the competition
potential and the parameters \(S,\gamma\).

The identification of the linear combination coefficient \(\beta\) is done exactly
as previously by minimization of the square loss between the empirical potential
\(\tilde C_\infty(\Delta t,.,.;s'_{1:N},\theta'_{1:N})\) and the class \(\mathfrak C_1\) over a training set. The degree of the polynomial approximation is
calibrated by computing the coefficient of determination \(R^2\) over a testing
set. We similarly learn by recurrence the functions \(\hat C_1(s,\theta),...,
\hat C_{M-1}(s,\theta)\). To simplify the procedure, we choose the same value of
degree \(d\) for all the \(f^5_d\) at each time step \(\Delta t,...,T-\Delta t\),
that is potentially different from the degree chosen for the function \(f^3_d\),
used to approximate the initial potential. The final expression of the
approximated mean-field characteristic flows is obtained by computing the integral
over the time defining the reconstructed potential \(\hat C_\infty(t,s,\theta)\) in
equation (\ref{eq:orgb4caa17}).
\begin{equation}
  \begin{aligned}
    &\hat C_\infty(t,s,\theta) = \sum_{k=0}^{M-1}C_k(s,\theta)\left[\mathbb
      I\{t_k\leq t<t_{k+1}\}\left(1-e^{\gamma(t_k-t)}\right)\right.\\
      &\left.+\mathbb I\{t_{k+1}
      \leq t\}\left(e^{\gamma(t_{k+1}-t)}-e^{\gamma(t_k-t)}\right)\right]
  \end{aligned}
\end{equation}

When the sample size \(N\rightarrow \infty\), the
empirical potential converges to the mean-field potential uniformly and almost
surely. This empirical potential is well approximated by the functions of the
families \(\mathfrak C_0\) or \(\mathfrak C_1\) if the degree \(d\) is chosen large
enough.

\underline{Example of a simulation}

We applied this numerical scheme to the initial distribution \(\mu_0\) defined in
the section \ref{org038c561}. To be a in a more generic setting though, we consider
that the initial size is uniformly distributed over the interval
\([s_0^{\min};s_0^{\max}]\), instead of being constant equal to \(s_0\). The values
of the parameters chosen for this simulation are given in table
\ref{org018e7c3}, and the list of the reconstruction performance, computed in
terms of \(R^2\) are given in table \ref{orge12493f}.

The spatial variations of functions \(\bar S(x)\) and \(\bar \gamma(x)\) are
represented on figure \ref{fig:org3bba911}. The configuration of the
initial distribution is chosen in order to have four spatial regions
distinguishing the parameters: regions with high and low \(S\), and regions with
high and low \(\gamma\). In the absence of competition, we expect the mean size of
plants at a given position to converge to \(\bar S(x)\). We can notice on figure
\ref{fig:org0964585} that, due to the competition, the surface \(x\mapsto
\hat s_\infty(T,\bar s,x,\bar S(x),\bar \gamma(x))\) is quite different from
\(x\mapsto \bar S(x)\), and that the region where the plants remain small in
average is wider than in the case without competition.

\begin{figure}[htbp]
\centering
\includegraphics[width=.9\linewidth]{./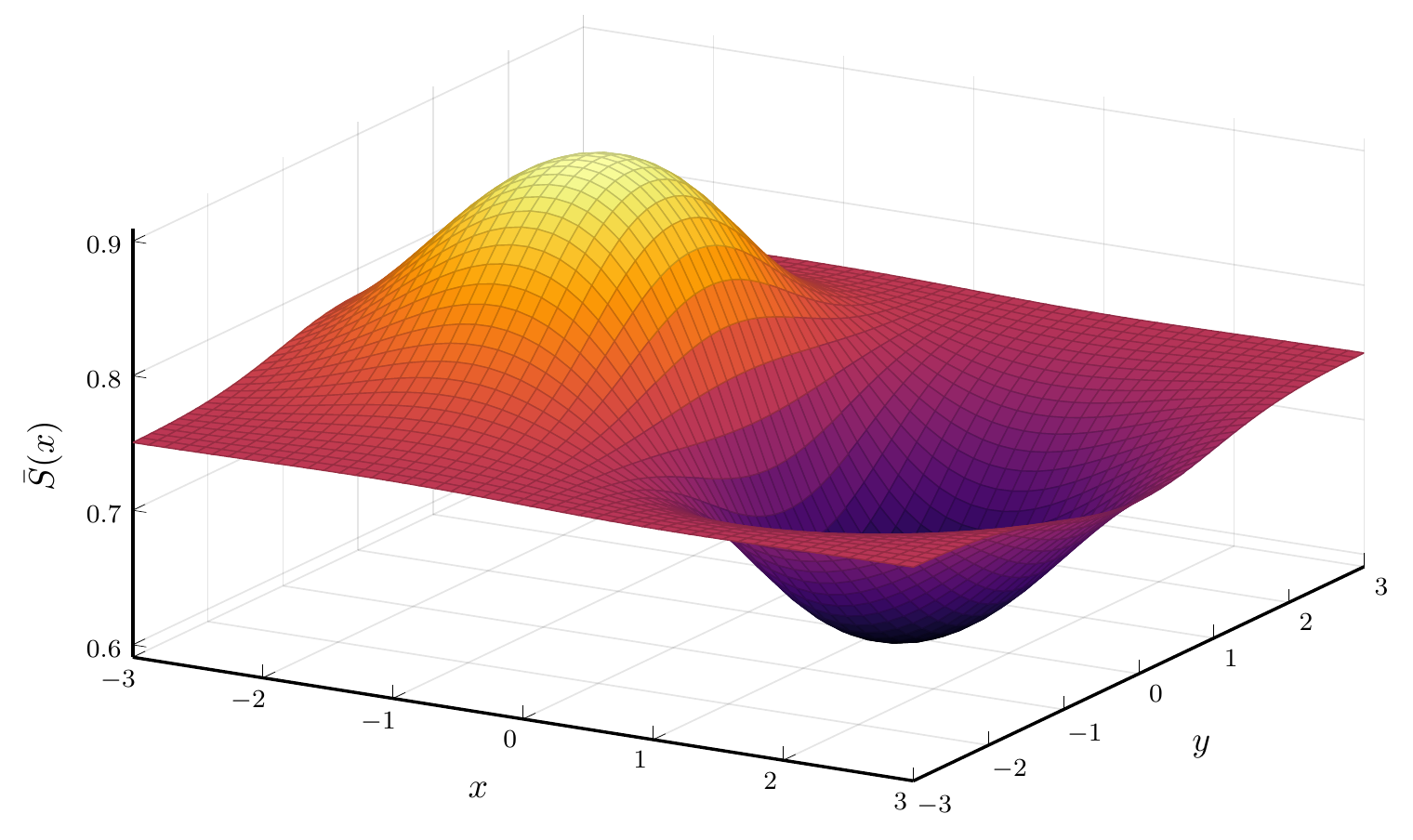}
\caption{\label{fig:org3bba911}Mean values of the parameters \(S\) according to the position of the plant}
\end{figure}

\begin{figure}[htbp]
\centering
\includegraphics[width=.9\linewidth]{./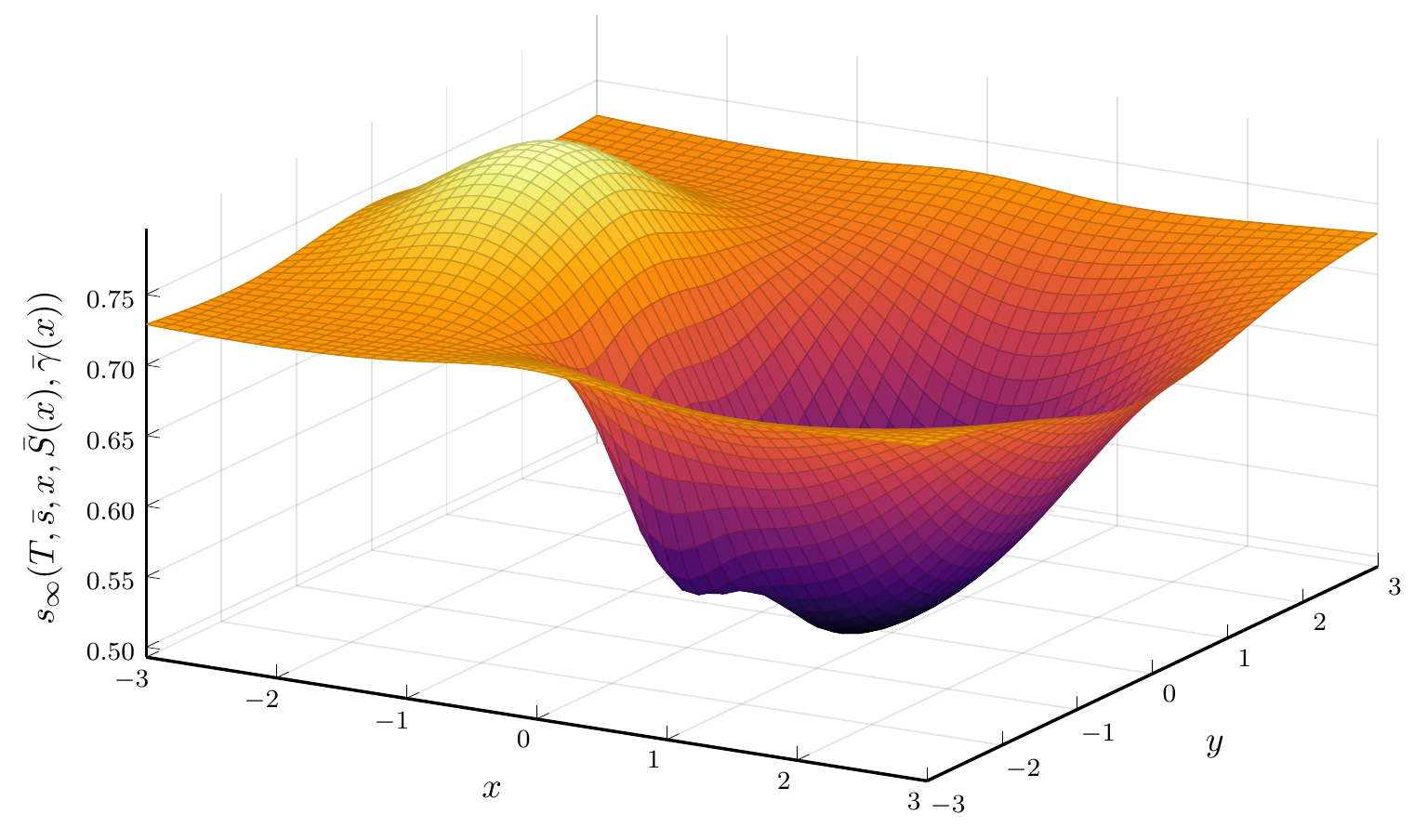}
\caption{\label{fig:org0964585}Comparison with the values of the mean-field flow at the final time \(\hat s_\infty(T,\bar s,x,\bar S(x), \bar\gamma(x))\) where \(\bar s = \displaystyle \frac{s_0^{\min}+s_0^{\max}}{2}\).}
\end{figure}

\section{Conclusion}
\label{sec:org01466d1}

The model of competition between plants studied in this article is frequently
used for its flexibility (by tuning the parameters \(\sigma_x\) and
\(\sigma_r\)) and for its capacity to reproduce the population dynamics observed
experimentally. This model has also interesting mathematical properties. Under
little restrictive assumptions on the initial conditions and parameters, the
dynamics is theoretically guaranteed to remain in a biologically interpretable
domain of the phase space. It is also possible to study the behavior of the
population when \(N\rightarrow \infty\), by simulating the mean-field dynamics, as
long as the initial distribution converges to a deterministic probability
measure. Finally, the simulations, both at the microscopic and macroscopic
scales, seem to indicate that the population tends to a stationary state when
\(t\rightarrow +\infty\). A future research direction could be to couple this
heterogeneous competition model with one of the birth and death processes, and
to study the evolution of the heterogeneity of the population.

\section{References}
\label{sec:org5a78acb}

\begin{enumerate}
\item \label{org9115032} Adams, T., Ackland, G., Marion, G., \& Edwards,
C. (2011). Effects of local interaction and dispersal on the dynamics of
size-structured populations. Ecological modelling, 222(8), 1414-1422.
\item \label{org1ddeaac} Adams, T. P., Holland, E. P., Law, R., Plank, M. J., \& Raghib,
M. (2013). On the growth of locally interacting plants: differential
equations for the dynamics of spatial moments. Ecology, 94(12), 2732-2743.
\item \label{org0f9a722} Berger, U., Piou, C., Schiffers, K., \& Grimm,
V. (2008). Competition among plants: concepts, individual-based modelling
approaches, and a proposal for a future research strategy. Perspectives in
Plant Ecology, Evolution and Systematics, 9(3-4), 121-135.
\item \label{org679f66f} Bolker, B., \& Pacala, S. W. (1997). Using moment equations to
understand stochastically driven spatial pattern formation in ecological
systems. Theoretical population biology, 52(3), 179-197.
\item \label{org759b328} Bolley, F., Canizo, J. A., \& Carrillo,
J. A. (2011). Stochastic mean-field limit: non-Lipschitz forces and
swarming. Mathematical Models and Methods in Applied Sciences, 21(11),
2179-2210.
\item \label{orgfe59c74} Beyer, R., Etard, O., Cournède, P. H., \& Laurent-Gengoux,
P. (2015). Modeling spatial competition for light in plant populations with
the porous medium equation. Journal of mathematical biology, 70(3), 533-547.
\item \label{org960cc74} Campillo, F., \& Joannides, M. (2009). A spatially explicit
Markovian individual-based model for terrestrial plant dynamics. arXiv
preprint arXiv:0904.3632.
\item \label{org56fc075} Carrillo, J. A., Goudon, T., Lafitte, P., \& Vecil,
F. (2008). Numerical schemes of diffusion asymptotics and moment closures for
kinetic equations. Journal of Scientific Computing, 36(1), 113-149.
\item \label{org3e15225} Clark, B., \& Bullock, S. (2007). Shedding light on plant
competition: modelling the influence of plant morphology on light capture
(and vice versa). Journal of theoretical biology, 244(2), 208-217.
\item \label{org179f28c} Cournède, P. H., Mathieu, A., Houllier, F., Barthélémy, D.,
\& De Reffye, P. (2008). Computing competition for light in the GREENLAB model
of plant growth: a contribution to the study of the effects of density on
resource acquisition and architectural development. Annals of Botany, 101(8),
1207-1219.
\item \label{orga691432} Dobrushin, R. L. V. (1979). Vlasov equations. Functional
Analysis and Its Applications, 13(2), 115-123.
\item \label{org9af7581} Fournier, N., \& Méléard, S. (2004). A microscopic
probabilistic description of a locally regulated population and macroscopic
approximations. The Annals of Applied Probability, 14(4), 1880-1919.
\item \label{org7f4b2af} Golse, F. (2016). On the dynamics of large particle systems in
the mean-field limit. In Macroscopic and large scale phenomena: coarse
graining, mean field limits and ergodicity (pp. 1-144). Springer, Cham.
\item \label{org8529c61} Lafitte, P., Lejon, A., \& Samaey, G. (2016). A high-order
asymptotic-preserving scheme for kinetic equations using projective
integration. SIAM Journal on Numerical Analysis, 54(1), 1-33.
\item \label{org46ed14b} Lagoutière, F., \& Vauchelet, N. (2017). Analysis and
simulation of nonlinear and nonlocal transport equations. In Innovative
Algorithms and Analysis (pp. 265-288). Springer, Cham.
\item \label{orga5cb81b} Law, R., \& Dieckmann, U. (1999). Moment approximations of
individual-based models.
\item \label{orgd4e2f62} Lv, Q., Schneider, M. K., \& Pitchford, J. W. (2008). Individualism
in plant populations: using stochastic differential equations to model
individual neighbourhood-dependent plant growth. Theoretical population
biology, 74(1), 74-83.
\item \label{orgacc2f3a} Nakagawa, Y., Yokozawa, M., \& Hara, T. (2015). Competition
among plants can lead to an increase in aggregation of smaller plants around
larger ones. Ecological Modelling, 301, 41-53.
\item \label{orgfc92cc1} Paine, C. T., Marthews, T. R., Vogt, D. R., Purves, D., Rees,
M., Hector, A., \& Turnbull, L. A. (2012). How to fit nonlinear plant growth
models and calculate growth rates: an update for ecologists. Methods in
Ecology and Evolution, 3(2), 245-256.
\item \label{org32a4d27} Rackauckas, C., \& Nie,
Q. (2017). Differentialequations. jl–a performant and feature-rich ecosystem
for solving differential equations in julia. Journal of open research
software, 5(1).
\item \label{org07ae0a9} Schneider, M. K., Law, R., \& Illian,
J. B. (2006). Quantification of neighbourhood-dependent plant growth by
Bayesian hierarchical modelling. Journal of Ecology, 310-321.
\item \label{org5f8ecba} Tsitouras, C. (2011). Runge–Kutta pairs of order 5 (4)
satisfying only the first column simplifying assumption. Computers \&
Mathematics with Applications, 62(2), 770-775.
\item \label{orgbaf95a3} Varadarajan, V. S. (1958). On the convergence of sample
probability distributions. Sankhyā: The Indian Journal of Statistics
(1933-1960), 19(1/2), 23-26.
\item \label{orgb06d6e2} Villani, C. (2009). Optimal transport: old and new
(Vol. 338, p. 23). Berlin: springer.
\item \label{orgdece5cb} Weigelt, A., \& Jolliffe, P. (2003). Indices of plant
competition. Journal of ecology, 707-720.
\end{enumerate}

\newpage
\appendix

\section{Appendix}
\label{sec:org8cc6ee7}

\subsection{Proof of inequality (\ref{eq:org34d318e})}
\label{sec:org002d1c3}
\label{org776979e}

Let us express the empirical flow and the mean-field flow (for the simplicity of notation, we omit the reference to $z^0_{1:N}$, $\hat s_N(t,s_1,\theta_1,z^0_{1:N}) = \hat s_N(t,s_1,\theta_1)$)
\begin{equation}
  \begin{aligned}
    &\hat s_N(t,s_1,\theta_1) = s_m(s_1/s_m)^{e^{-\gamma_N(\theta_1)t}}\exp
    \left(\gamma_N(\theta_1)\log(S_1/s_m)\right.\\
    &\left.\times \int_{0}^t\left(1- \frac{N}{N-1}\int_{\RR_+^*\times \RR^2}
    C(\hat s_N(\tau,s_1,\theta_1),s_1',\vert x_1-x_1'\vert )\hat \mu_N^{s,x}(\tau,d
    s_1',x_1')\right)e^{\gamma_N(\theta_1)(\tau-t)}d \tau\right)\\
    &\text{where }\gamma_N(\theta)=\gamma_N(x,S,\gamma) = \gamma\left(1-
    \frac{\log(S/s_m)}{2(N-1)R_M}\right)\\
    &s_\infty(t,s_2,\theta_2) = s_m(s_2/s_m)^{e^{-\gamma_2t}}\exp\left(\gamma_2
    \log(S_2/s_m)\phantom{\int_0^t}\right.\\
    &\times \left.\int_0^t\left(1-\int_{\RR_+^*\times \RR^2}C(s_\infty(\tau,s_2,
    \theta_2),s_2',\vert x_2-x_2'\vert )\mu^{s,x}(t,d s'_2,d x'_2)\right)
    e^{\gamma_2(\tau-t)}d \tau\right)
  \end{aligned}
\end{equation}
Let us consider the function
\begin{equation}
  \begin{aligned}
    &\mathcal S:(t,s,\gamma,S,C)\in \RR_+\times [s_0^{\min};s_0^{\max}]\times[0;\gamma_M]
    \times [S_m;s_m\exp(R_M)]\times [-1;1]\\
    &\mapsto
    s_m\left(\frac{s}{s_m}\right)^{e^{-\gamma t}}\left(\frac{S}{s_m}\right)^C
  \end{aligned}
\end{equation}
We introduce additional notations for the competition terms. Let \(s_1,s_2\in
[s_0^{\min};s_0^{\max}]\) and \(\theta_1= (x_1,S_1,\gamma_1),
\theta_2 = (x_2,S_2,\gamma_2)\in \Theta\).
\begin{equation}
  \begin{aligned}
    &C_N(t,s_1,\theta_1)\\
    &= \gamma_N(\theta_1)\int_{0}^t\left(1- \frac{N}{N-1}\int_{\RR_+^*\times
      \RR^2}C(\hat s_N(\tau,s_1,\theta_1),s_1',\vert x_1-x_1'\vert )\hat \mu_N^{s,x}(\tau,
    d s_1',x_1')\right)e^{\gamma_N(\theta_1)(\tau-t)}d \tau\\
    &C_\infty(t,s_2,\theta_2) = \gamma_2\int_0^t\left(1-\int_{\RR_+^*\times \RR^2}
    C(s_\infty(\tau,s_2,\theta_2),s_2',\vert x_2-x_2'\vert )\mu^{s,x}(t,d s'_2,d
    x'_2)\right)e^{\gamma_2(\tau-t)}d \tau
  \end{aligned}
\end{equation}
We decompose the difference between the two flows into three terms.
\begin{equation}
  \begin{aligned}
    &\hat s_N(t,s_1,\theta_1)-s_\infty(t,s_2,\theta_2) = \mathcal S(t,s_1,
    \gamma_N(\theta_1),S_1,C_N(t,s_1,\theta_1))-\mathcal S(t,s_1,\gamma_1,S_1,
    C_N(t,s_1,\theta_1))\\
    &+\mathcal S(t,s_1,\gamma_1,S_1,C_N(t,s_1,\theta_1))-\mathcal S(t,s_2,\gamma_2,
    S_2,C_N(t,s_1,\theta_1))\\
    &+\mathcal S(t,s_2,\gamma_2,S_2,C_N(t,s_1,\theta_1))-\mathcal S(t,s_2,\gamma_2,
    S_2,C_\infty(t,s_2,\theta_2))
  \end{aligned}
\end{equation}
Let us consider the first term.
\begin{equation}
  \begin{aligned}
    &\mathcal S_1=\vert \mathcal S(t,s_1,\gamma_N(\theta_1),S_1,C_N(t,s_1,\theta_1))-\mathcal
    S(t,s_1,\gamma_1,S_1,C_N(t,s_1,\theta_1))\vert \\
    &\leq \int_0^1\left\vert \frac{\partial \mathcal S}{\partial \gamma}(t,s_1,\gamma_1+
    \alpha(\gamma_N(\theta_1)-\gamma_1),S_1,C_N(t,s_1,\theta_1))d \alpha
    \right\vert .\vert \gamma_N(\theta_1)-\gamma_1\vert \\
    &\frac{\partial \mathcal S}{\partial \gamma}(t,s,\gamma,S,C) = -te^{-\gamma t}
    s_m\log(s/s_m)\left(\frac{s}{s_m}\right)^{e^{-\gamma t}}\left(\frac{S}{s_m}
    \right)^C\\
    &\mathcal S_1\leq \gamma_1ts_1\log(s_1/s_m)(S_1/S_m)\frac{\log(S_1/s_m)}{2
      (N-1)R_M}
  \end{aligned}
\end{equation}
Let us consider the second term.
\begin{equation}
  \begin{aligned}
    &\mathcal S_2 = \vert \mathcal S(t,s_1,\gamma_1,S_1,C_N(t,s_1,\theta_1))-\mathcal S(t,s_2,
    \gamma_2,S_2,C_N(t,s_1,\theta_1))\vert \\
    &\frac{\partial \mathcal S}{\partial s}(t,s,\gamma,S,C) = e^{-\gamma t}(S/
    s_m)^C(s/s_m)^{e^{-\gamma t}-1}\\
    &\left\vert \frac{\partial \mathcal S}{\partial s}(t,s,\gamma,S,C)\right\vert \leq e^{R_M}\\
    &\frac{\partial \mathcal S}{\partial S}(t,s,\gamma,S,C) = C\left(\frac{s}{s_m}
    \right)^{e^{-\gamma t}}\left(\frac{S}{s_m}\right)^{C-1}\\
    &\left\vert \frac{\partial \mathcal S}{\partial S}(t,s,\gamma,S,C)\right\vert \leq
    \frac{s_0^{\max}}{s_m}\\
    &\mathcal S_2\leq e^{R_M}\vert s_1-s_2\vert +ts^0_{\max}\log(s_0^{\max}/s_m)e^{R_M}
    \vert \gamma_1-\gamma_2\vert +(s_0^{\max}/s_m)\vert S_1-S_2\vert 
  \end{aligned}
\end{equation}
Let us consider the third term.
\begin{equation}
  \begin{aligned}
    &\frac{\partial\mathcal S}{\partial C}(t,s,\gamma,S,C) = s_m \left(
    \frac{S}{s_m}\right)^C \log \left(\frac{S}{s_m}\right)
    \left(\frac{s}{s_m}\right)^{e^{-\gamma t}}\\
    &\left\vert \frac{\partial\mathcal S}{\partial C}(t,s,\gamma,S,C)\right\vert \leq
    s_0^{\max}R_Me^{R_M}\\
    &\vert \mathcal S(t,s_2,\gamma_2,S_2,C_N(t,s_1,\theta_1))-\mathcal S(t,s_2,\gamma_2,
    S_2,C_\infty(t,s_2,\theta_2))\vert \leq \\
    &s_0^{\max}e^{R_M}R_M \vert C_N(t,s_1,\theta_1)-C_\infty(t,s_2,\theta_2)\vert 
  \end{aligned}
\end{equation}
Let us expand the difference of the competition terms:
\begin{equation}
  \begin{aligned}
    &c_N(t,s_1,\theta_1) = \int_{\RR_+^*\times \RR^2}C(\hat s_N(t,s_1,\theta_1),
    s'_1,\vert x_1-x'_1\vert )\hat \mu_N^{s,x}(t,d s'_1,d x'_1)\\
    &c_\infty(t,s_2,\theta_2) = \int_{\RR_+^*\times \RR^2}C(s_\infty(t,s_2,
    \theta_2),s'_1,\vert x_2-x'_2\vert )\mu(t,d s'_2,d x'_2)
  \end{aligned}
\end{equation}
\begin{equation}
  \begin{aligned}
    &\vert C_N(t,s_1,\theta_1)-C_\infty(t,s_2,\theta_2)\vert \\
    &\leq \left\vert \int_0^t\left(1-\frac{N}{N-1}c_N(\tau,s_1,\theta_1)\right)
    \left(\gamma_N(\theta_1)e^{\gamma_N(\theta_1)(\tau-t)}-\gamma_1
    e^{\gamma_1(\tau-t)}\right)d \tau\right\vert \\
    &+\left\vert \int_0^t\left(1-\frac{N}{N-1}c_N(\tau,s_1,\theta_1)\right)\left(
    \gamma_1e^{\gamma_1(\tau-t)}-\gamma_2e^{\gamma_2(\tau-t)}\right)d
    \tau\right\vert \\
    &+\frac{\gamma_2}{N-1}\left\vert \int_0^tc_N(\tau,s_1,\theta_1)e^{\gamma_2
      (\tau-t)}d \tau\right\vert +\gamma_2\int_0^t\left\vert c_N(\tau,s_1,\theta_1)-
    c_\infty(\tau,s_2,\theta_2)\right\vert e^{\gamma_2(\tau-t)}d \tau
  \end{aligned}
\end{equation}
Let us consider the first term.
\begin{equation}
  \begin{aligned}
    &\vert 1-\frac{N}{N-1}c_N(\tau,s_1,\theta_1)\vert \leq 1\\
    &\frac{\partial}{\partial \gamma}\left(\gamma\int_0^t\left(1-\frac{N}{N-1}
    c_N(\tau,s_1,\theta_1)\right)e^{\gamma(\tau-t)}d \tau \right)\\
    &= \int_0^t\left(1-\frac{N}{N-1}c_N(\tau,s_1,\theta_1)\right)(1+\gamma(
    \tau-t))e^{\gamma(\tau-t)}d \tau\\
    &\left\vert \frac{\partial}{\partial \gamma}\left(\gamma\int_0^t\left(1-
    \frac{N}{N-1}c_N(\tau,s_1,\theta_1)\right)e^{\gamma(\tau-t)}d \tau
    \right)\right\vert \leq \int_0^t(1+\gamma(t-\tau))e^{\gamma(\tau-t)}d \tau\\
    &= \frac{2-e^{-\gamma t}(2+\gamma t)}{\gamma}\leq t
  \end{aligned}
\end{equation}
We can deduce from the above inequality the following upper-bound on the
competition terms.
\begin{equation}
  \begin{aligned}
    &\vert C_N(t,s_1,\theta_1)-C_\infty(t,s_2,\theta_2)\vert \leq \gamma_1t\frac{\log(S_1/
      s_m)}{2R_M(N-1)}+t\vert \gamma_1-\gamma_2\vert +\frac{1}{N-1}\\
    &+\gamma_2\int_0^t\left\vert \int_{\RR^*_+\times \Theta}C(\hat s_N(\tau,s_1,
    \theta_1),s'_1,\vert x_1-x'_1\vert )\hat \mu_N^{s,x}(\tau,d s'_1,d x'_1)\right.\\
    &\left.-\int_{\RR_+^*\times \Theta}C(s_\infty(\tau,s_2,\theta_2),s'_2,\vert x_2-
    x'_2\vert )\mu(\tau,d s'_2,d x'_2)\right\vert e^{\gamma_2(\tau-t)}d \tau\\
  \end{aligned}
\end{equation}
We use the coupling \(\pi_0\) introduced at the beginning of the proof to express
the last term.
\begin{equation}
  \begin{aligned}
    &\left\vert \int_{\RR^*_+\times \Theta}C(\hat s_N(t,s_1,\theta_1),s'_1,\vert x_1-x'_1\vert )
    \hat \mu_N^{s,x}(t,d s'_1,d x'_1)\right.\\
    &\left.-\int_{\RR_+^*\times \Theta}C(
    s_\infty(t,s_2,\theta_2),s'_2,\vert x_2-x'_2\vert )\mu(t,d s'_2,d x'_2)\right\vert 
  \end{aligned}
\end{equation}
\begin{equation}
  \begin{aligned}
    \leq &\iint_{(\RR_+^*\times \Theta)^2}\vert C(\hat s_N(t,s_1,\theta_1),\hat s_N(t,s'_1,
    \theta'_1),\vert x_1-x'_1\vert )-C(s_\infty(t,s_2,\theta_2),s_\infty(t,s_2,\theta'_2),
    \vert x_2-x'_2\vert )\vert \\
    &\times \pi_0(d s'_1,d \theta_1',d s'_2,d \theta'_2)\\
  \end{aligned}
\end{equation}
We compute the derivatives of the competition potential to obtain upper-bounds
of the variations. For any \(\tilde s_1\in [s_m^N;\hat S_N(S_1)]\), any
\(\tilde s_2\in [s_m;S_2]\), and \(\delta \geq 0\), we have
\begin{equation}
  \begin{aligned}
    &\frac{\partial C}{\partial s}(\tilde s_1,\tilde s_2,\delta) = -\frac{
      \log(\tilde s_2/s_m)(1-\tanh^2(\frac{1}{\sigma_r}\log(\tilde s_2/\tilde
      s_1)))}{2R_M\tilde s_1\sigma_r(1+\delta^2/\sigma_x^2)}\\
    &\left\vert \frac{\partial C}{\partial s}(\tilde s_1,\tilde s_2,\delta)\right\vert 
    \leq \frac{1}{2s^N_m\sigma_r}\\
    &\frac{\partial C}{\partial s'}(\tilde s_1,\tilde s_2,\delta) =
    \frac{\sigma_x^2(\sigma_r(1+\tanh(\frac{1}{\sigma_r}\log(\tilde s_2/
      \tilde s_1))+\log(\tilde s_2/\tilde s_1)(1-\tanh^2(\frac{1}{\sigma_r}
      \log(\tilde s_2/\tilde s_1))))}{2R_M\tilde s_2\sigma_r(\sigma_x^2+
      \delta^2)}\\
    &\left\vert \frac{\partial C}{\partial s'}(\tilde s_1,\tilde s_2,\delta)\right\vert 
    \leq \frac{2\sigma_r+R_M}{2R_Ms_m\sigma_r}\\
    &\frac{\partial C}{\partial \delta^2}(\tilde s_1,\tilde s_2,\delta) =
    -\frac{\log(\tilde s_2/s_m)(1+\tanh(\frac{1}{\sigma_r}\log(\tilde s_2/\tilde
      s_1)))}{2R_M\sigma_x^2(1+\delta^2/\sigma_x^2)^2}\\
    &\left\vert \frac{\partial C}{\partial \delta^2}(\tilde s_1,\tilde s_2,\delta)
    \right\vert \leq \frac{1}{\sigma_x^2}
  \end{aligned}
  \label{eq:derivativeC}
\end{equation}
It follows that
\begin{equation}
  \begin{aligned}
    &\iint_{(\RR_+^*\times \Theta)^2}\vert C(\hat s_N(t,s_1,\theta_1),\hat s_N(t,s'_1,
    \theta'_1),\vert x_1-x'_1\vert )\\
    &-C(s_\infty(t,s_2,\theta_2),s_\infty(t,s_2,\theta'_2),
    \vert x_2-x'_2\vert )\vert  \pi_0(d s'_1,d \theta_1',d s'_2,d \theta'_2)\\
    &\leq
    \frac{\vert \hat s_N(t,s_1,\theta_1)-s_\infty(t,s_2,\theta_2)\vert }{2s_m^N\sigma_r}\\
    &+\frac{2\sigma_r+R_M}{2R_Ms_m \sigma_r}\iint_{(\RR_+^*\times \Theta)^2}\vert \hat
    s_N(t,s'_1,\theta_1')-s_\infty(t,s'_2,\theta'_2)\vert \pi_0(d s'_1,d
    \theta'_1,d s'_2,d \theta'_2)\\
    &+\frac{\vert x_1-x_2\vert }{\sigma_x^2}\int_{\RR^2}\vert x_1+x_2-2x'_2\vert \mu_0^x(d x'_2)\\
    &+\frac{1}{\sigma_x^2}\iint_{\RR^4}\vert x'_1+x'_2-2x_1\vert .\vert x'_1-x'_2\vert \pi_0^x(d x'_1,d x'_2)
  \end{aligned}
\end{equation}
For the last terms relative to \(x\), we have used the relation
\begin{equation}
  \vert x_1-x_1'\vert ^2-\vert x_2-x'_2\vert ^2= (x_1+x_2-2x'_2).(x_1-x_2)+(x'_1+x'_2-2x_1).(x'_1-x'_2)
\end{equation}
Let us further consider the \(x\) terms.
\begin{equation}
  \begin{aligned}
    &\int_{\RR^2}\vert x_1+x_2-2x'_2\vert \mu_0^x(d x'_2)\leq \vert x_1+x_2\vert +2\int_{\RR^2}
    \vert x'_2\vert \mu_0^x(d x'_2)\\
    &\iint_{\RR^4}\vert x'_1+x'_2-2x_1\vert .\vert x'_1-x'_2\vert \pi_0^x(d x'_1,d x'_2)\\
    &\leq \sqrt{\iint_{\RR^4}\vert x'_1+x'_2-2x_1\vert ^2\pi^x_0(d x'_1,d x'_2)}.
    \sqrt{\iint_{\RR^4}\vert x'_1-x'_2\vert ^2\pi^x_0(d x'_1,d x'_2)}\\
    &\iint_{\RR^4}\vert x'_1+x'_2-2x_1\vert ^2\pi_0^x(d x'_1,d x'_2)\\
    &\leq 2\int_{\RR^2}\vert x'_1\vert ^2\hat \mu_N^x(0,d x'_1)+2\int_{\RR^2}\vert x'_2\vert ^2
    \mu_0^x(d x'_2)-4x_1.\left(\int_{\RR^2}x'_1\hat \mu_N^x(0,d x'_1)
    +\int_{\RR^2}x'_2\mu_0^x(d x'_2)\right)\\
    &=k^x(x_1,\hat \mu_N^x(0),\mu_0^x)^2
  \end{aligned}
  \label{eq:kx}
\end{equation}
We gather all the previous inequalities to obtain an upper-bound on the gap
between the microscopic and the mean-field flows:
\begin{equation}
  \begin{aligned}
    &\vert \hat s_N(t,s_1,\theta_1)-s_\infty(t,s_2,\theta_2)\vert \leq \frac{\gamma_1ts_1
      \log(s_1/s_m)(S_1/s_m)\log(S_1/s_m)}{2R_M(N-1)}+e^{R_M}\vert s_1-s_2\vert \\
    &+ts^0_{\max}\log(s_0^{\max}/s_m)e^{R_M}\vert \gamma_1-\gamma_2\vert +(s_0^{\max}/S_m)
    \vert S_1-S_2\vert \\
    &+s_0^{\max}e^{R_M}R_M\left(\gamma_1t\frac{\log(S_1/s_m)}{2R_M(N-1)}+t
    \vert \gamma_1-\gamma_2\vert +\frac{1}{N-1}\right)\\
    &+s_0^{\max}e^{R_M}R_M\gamma_M\int_0^t\left(\frac{\vert \hat s_N(\tau,s_1,
      \theta_1)-s_\infty(\tau,s_2,\theta_2)\vert }{s_m^N\sigma_r}\right.\\
    &+\frac{2\sigma_r+R_M}{2R_Ms_m\sigma_r}\iint_{(\RR_+^*\times \Theta)^2}\vert 
    \hat s_N(\tau,s'_1,\theta'_1)-s_\infty(\tau,s_2',\theta'_2)\vert \pi_0(d s'_1,
    d \theta'_1,d s'_2,d \theta'_2)\\
    &+\frac{1}{\sigma_x^2}\left(\vert x_1+x_2\vert +\int_{\RR^2}\vert x\vert \mu_0^x(d x)
    \right)\vert x_1-x_2\vert \\
    &\left.+\frac{k^x(x_1,\hat \mu_N^x(0),\mu_0^x)}{\sigma_x^2}\iint_{\RR^4}\vert x_1'-x'_2\vert 
    \pi_0^x(d x'_1,d x'_2)\right)d \tau
  \end{aligned}
\end{equation}
Finally, we obtain the required inequality by integrating over \(\pi_0\).

\subsection{Parameters values chosen for the simulations in section \ref{org46e8607}}
\label{sec:org5e6d013}

\begin{table}[H]
\begin{tabular}{|c|c|}
\hline
initial size                                      & $s_0 = 0.1$                   \\ \hline
Extremal sizes                                                  & $s_m = 5\times 10^{-2}$, $R_M = 3$                     \\ \hline
Position variance                                               & $L = 1$                                                \\ \hline
Extremal values of $\bar S(x)$                                  & $S_m = 0.5$, $S_M = 1.0$, $S_0 = 0.75$                 \\ \hline
Positions of $\bar S(x)$ extrema                                & $x_{\max}^S = (-L,0)$, $x_{\min}^S = (L,0)$            \\ \hline
Spatial curvatures of $\bar S(x)$                               & $H_{\max}^S = H_{\min}^S = \mathrm{I}_2/L^2$           \\ \hline
Standard deviation of $S$                                       & $\delta S = 0.1$                                       \\ \hline
Extremal values of $\bar \gamma(x)$                             & $\gamma_M = 2$,$\gamma_m = 0.1$,$\gamma_0 = 1.05$      \\ \hline
Positions of $\bar \gamma(x)$ extrema                           & $x_{\min}^\gamma = (0,-L),~x_{\max}^\gamma = (0,L)$    \\ \hline
Spatial curvature of $\bar \gamma(x)$                           & $H_{\max}^\gamma = H_{\min}^\gamma = \mathrm{I}_2/L^2$ \\ \hline
Standard deviation of $\gamma$                                  & $\delta\gamma = 0.1$                                   \\ \hline
Competition parameters                                          & $\sigma_r =
1.32$, $\sigma_x = L/2$                    \\ \hline
\end{tabular}
\label{org4cb584e}
\end{table}

\subsection{Parameters values chosen for the simulations in section \ref{orgeb2d575}}
\label{sec:org3c93908}

\begin{table}[H]
\begin{tabular}{|c|c|}
\hline
Bounds of the initial size                                      & $s_0^{\min} = 0.1,~s_0^{\max} = 0.3$                   \\ \hline
Time step and time horizon                                      & $\Delta t = 1$, $T = 10$                               \\ \hline
sample size    & $N = 1~000$                                            \\ \hline
size of training and testing sets                               & $K = 1~000$                                            \\ \hline
initial degree of polynomial & $\deg(f^3_d) = 5$                                      \\ \hline
other degree  & $\deg(f^5_d) = 3$                                      \\ \hline
\end{tabular}
\label{org018e7c3}
\end{table}

\subsection{R\textsuperscript{2} quantifying the reconstruction of the competition potential of the simulation in section \ref{orgeb2d575}}
\label{sec:org1178aca}

\begin{table}[H]
\begin{tabular}{|c|c|}
\hline
Time & $R^2$ associated with the potential reconstruction \\ \hline
0    & 0.993                                          \\ \hline
1    & 0.976                                          \\ \hline
2    & 0.978                                          \\ \hline
3    & 0.980                                          \\ \hline
4    & 0.980                                          \\ \hline
5    & 0.980                                          \\ \hline
6    & 0.980                                          \\ \hline
7    & 0.981                                          \\ \hline
8    & 0.981                                          \\ \hline
9    & 0.981                                          \\ \hline
\end{tabular}
\label{orge12493f}
\end{table}

\end{document}